\documentclass[10pt]{article}
\usepackage[margin=1.15in]{geometry} 
\usepackage{amsmath,amsthm,amssymb}
\usepackage{float}
 \usepackage{hyperref}
 \usepackage{authblk}

\newcommand{\E}{\mathbb{E}}
\newcommand{\ignore}[1]{}
\usepackage[dvipsnames]{xcolor}
\usepackage{multirow}
\usepackage{stmaryrd}
\usepackage{bm}
\usepackage{caption}
\usepackage{cleveref}
\usepackage{tcolorbox}
\usepackage{MnSymbol}
\newtheorem{theorem}{Theorem}
\newtheorem{remark}{Remark}

\usepackage{mathtools}
\newtheorem{lemma}{Lemma}

\newtheorem{definition}{Definition}

\usepackage{algorithm}
\usepackage{algpseudocode}
\usepackage{subcaption}
\usepackage{cleveref}

\numberwithin{equation}{section}

\graphicspath{{..}}

\newcommand{\an}[1]{{\leavevmode\color{red}{#1}}}
    \makeatletter
\def\@fnsymbol#1{\ensuremath{\ifcase#1\or \dagger\or \ddagger \or 
   \mathsection\or \mathparagraph\or \|\or **\or \dagger\dagger
   \or \ddagger\ddagger \else\@ctrerr\fi}}
    \makeatother

\begin{document}
 
 
\title{Structure-Preserving Discontinuous Galerkin Methods for Stochastic Shallow Water Equations}

\author[1]{Yekaterina Epshteyn \thanks{\href{epshteyn@math.utah.edu}{epshteyn@math.utah.edu}}}

\author[1,2]{Akil Narayan \thanks{\href{akil@sci.utah.edu}{akil@sci.utah.edu}}}

\author[1]{Yinqian Yu \thanks{ \href{yinqian.yu@utah.edu}{yinqian.yu@utah.edu}}}

\affil[1]{\small Department of Mathematics, University of Utah, 155 S 1400 E, Salt Lake City, 84112, UT, USA}
\affil[2]{\small Scientific Computing and Imaging Institute, University of Utah, 72 S Central Campus Dr, Salt Lake City, 84112, UT, USA}

\date{ }
\maketitle
\begin{abstract}
  Shallow water equations (SWE) are fundamental models in fluid dynamics that are
essential for studying a wide range of geophysical and engineering phenomena. In many practical applications, uncertainties arising from initial conditions and bottom topography must be taken into account, motivating the development of stable and accurate numerical methods for stochastic SWE.
Building on the hyperbolicity-preserving stochastic Galerkin formulation for SWE
[Dai, Epshteyn, Narayan, 2021 SISC] and a stochastic extension of the entropy stable discontinuous Galerkin methods for skew-symmetric SWE [Fu, 2022 JSC], we develop a structure-preserving, entropy conservative, and entropy stable discontinuous Galerkin--stochastic Galerkin method for the stochastic shallow water system, with the well-balanced property. We demonstrate the accuracy, applicability, and robustness of the proposed structure-preserving algorithms through
several numerical experiments.

\end{abstract}

{\bf Keywords:} stochastic shallow water equation, stochastic Galerkin, discontinuous Galerkin, structure-preserving, entropy stability.


\section{Introduction}\label{intro}
The Saint-Venant system of shallow water equations (SWE) is a classical and widely used model in the mathematical and physical description of geophysical fluid flows, particularly in regimes where the horizontal length scale is significantly larger than the vertical one, such as rivers, lakes, and coastal regions \cite{de1871theorie}. This system is capable of capturing the essential dynamics of wave propagation, transport processes, and general hydrodynamic behavior under the shallow-water assumption, making it a fundamental model for a broad range of practical applications involving water flows. Despite its wide applicability, deterministic shallow water models are often insufficient in realistic settings, since environmental data and operating conditions are frequently uncertain or partially unknown. Incorporating such uncertainty is therefore essential for improving predictive capability, which motivates the development of \textit{parameterized} stochastic shallow water equations (SWE), in which randomness is introduced through random inputs to model uncertainty propagation.

To represent uncertainty, we employ the polynomial chaos expansion (PCE) framework, which provides a systematic approach for representing random fields in terms of orthogonal polynomial bases. Originally introduced by Wiener \cite{wiener1938homogeneous} for Gaussian processes using Hermite polynomials, the method has been extended to more general orthogonal polynomial families suitable for arbitrary probability distributions \cite{ghanem2003stochastic,le2004uncertainty,wan2005adaptive,xiu2002wiener}. In this framework, uncertainty is encoded through a truncated spectral expansion, which transforms the stochastic partial differential equation into a coupled deterministic system governing the expansion coefficients. Two main classes of numerical methods arise from this formulation: intrusive and non-intrusive approaches.

Non-intrusive methods, based on sampling strategies \cite{mishra2012multilevel,nobile2008sparse,xiu2005high}, construct the stochastic solution by repeatedly solving deterministic realizations of the system at selected samples of the random variables. This approach has the advantage of directly utilizing existing and well-established deterministic solvers for the shallow water equations, including a wide range of finite volume and related methods \cite{zhou2001surface,kurganov2002central,rogers2003mathematical,vcrnjaric2004balanced,xing2005high,xing2006high,xing2006new,kurganov2007second,BEKP,epshteyn2023adaptive,kurganov2018finite,liu2018well,xing2016high,XING2017361,zhong2022entropy}. However, while convenient and flexible, non-intrusive approaches may suffer from reduced accuracy compared to spectral methods, and it is often difficult to guarantee important physical properties such as entropy stability, and the ability of preserving equilibrium states. In contrast, intrusive stochastic Galerkin (SG) methods project the truncated PCE representation onto the stochastic basis, leading to a coupled deterministic system for the expansion coefficients \cite{wiener1938homogeneous,xiu2002wiener}. Although this approach requires significant modifications to existing deterministic solvers and results in increased algebraic and computational complexity, it is generally expected to achieve higher-order accuracy than non-intrusive methods. Since the SG method is based on projection, it leads to near-optimal accuracy in the $L^2$ sense for static problems \cite{babuska2004galerkin,le2010spectral}. Moreover, the SG framework facilitates rigorous analysis of both the resulting SG system and its associated numerical discretizations. As a consequence, SG methods have been successfully applied to uncertainty quantification in diffusion problems \cite{xiu2009efficient,eigel2014adaptive}, kinetic equations \cite{hu2016stochastic,shu2017stochastic}, and conservation and balance laws with symmetric Jacobian matrices \cite{tryoen2010intrusive}. Nevertheless, for general nonlinear hyperbolic systems such as the shallow water equations and the Euler equations, the resulting SG system may lose hyperbolicity \cite{despres2013robust,gerster2019hyperbolic,jin2019study}, leading to ill-posedness, potentially producing unphysical solutions, and compromising the robustness of the subsequent numerical schemes.

To address these challenges, various structure-preserving strategies have been proposed in the literature. Recent advances include SG-based analysis and algorithms for scalar conservation laws \cite{zhou2012galerkin}, including well-balanced schemes \cite{jin2016well}, Haar wavelet-based approaches \cite{gerster2022haar}, hyperbolicity-preserving formulations based on non-equivalent Roe variables \cite{gerster2020entropies,bender2024entropy,offner2026high}, filtering techniques \cite{kusch2020filtered}, limiter-based strategies \cite{schlachter2018hyperbolicity}, hyperbolicity-preserving formulations for linear problems \cite{pulch2012generalised} and approaches based on linearization \cite{wu2017stochastic}, operator splitting methods \cite{chertock2015operator,chertock2015well}, non-conservative formulations of SG SWE systems \cite{chen2023cross}, and entropy-variable representations \cite{poette2009uncertainty,poette2019}.

In this paper, we build upon the recent work on a hyperbolicity-preserving SG formulation for the stochastic SWE \cite{dai2021hyperbolicity} and combine it with a skew-symmetric formulation of SWE \cite{fu2022high} with a stochastic extension, resulting in a hyperbolicity-preserving skew-symmetric SG system for stochastic SWE. Despite the hyperbolicity-preserving property, the SG system is a nonlinear hyperbolic conservation law or balance law, and therefore it inherits the standard challenges associated with the numerical approximation of such systems. For instance, solutions may develop shock discontinuities in finite time for generic initial data, leading to non-unique weak solutions. To select the physically relevant solution, an additional entropy condition must be enforced, either explicitly or implicitly. Moreover, the strong nonlinearity of the system poses significant challenges for implicit time discretizations \cite{dafermos2016hyperbolic,leveque1992numerical,leveque2002finite}. 

Over the past decades, a variety of high-resolution numerical methods have been developed for such hyperbolic systems, most notably finite volume (FV) and discontinuous Galerkin (DG) methods \cite{leveque2002finite,cockburn2001runge}. FV schemes provide robust shock-capturing capabilities through conservative flux formulations \cite{kurganov2000new}, while DG methods can further achieve high-order accuracy with a variational framework using piecewise polynomial approximations \cite{hesthaven2008nodal}. In recent years, considerable effort has been devoted to the design of structure-preserving schemes, including entropy-conservative and entropy-stable methods that enforce consistency with the entropy inequality \cite{tadmor2003entropy,fjordholm2011well}, as well as well-balanced schemes that exactly preserve steady states at the discrete level for balance laws such as the shallow water equations \cite{bermudez1994upwind,xing2005high,castro2010some}. These challenges inherited from nonlinear hyperbolic conservation/balance laws are further compounded in the stochastic Galerkin setting, where the coupling between stochastic modes introduces additional complexity, making the design of robust, structure-preserving numerical schemes significantly more difficult.

\subsection{Contributions of this paper}
In this paper, we develop a discontinuous Galerkin--stochastic Galerkin (DG-SG) scheme for stochastic shallow water equations (SWE), with the desired well-balanced property, entropy stability, and an applicable positivity-preserving criterion for the fully discrete numerical scheme.
\begin{itemize}\label{paper_contribution}
  \item We derive a \textit{skew-symmetric} hyperbolicity-preserving stochastic Galerkin (SG) formulation for the stochastic SWE. This formulation builds on the SGSWE in \cite{dai2021hyperbolicity}, together with the stochastic extension of the skew-symmetric SWE in \cite{fu2022high}. The resulting skew-symmetric SGSWE is essential for constructing a DG scheme that satisfies the entropy admissibility criterion to resolve the non-uniqueness of weak solutions.
    \item Based on the skew-symmetric SGSWE, we construct high-order, well-balanced, entropy conservative, and entropy stable semi-discrete DG-SG schemes for the stochastic SWE. The designed numerical fluxes in our DG-SG scheme are stochastic extensions of those developed in \cite{fu2022high} for the deterministic SWE. We present practical fully-discrete versions of semi-discrete scheme that are implementable algorithms to simulate the system.
    \item We present several challenging numerical experiments to evaluate the performance of our schemes, including accuracy, well-balanced property, energy decay, and numerical robustness.
\end{itemize}
Compared with closely related SG methods for the stochastic SWE, the present work combines a different SG formulation with a different spatial discretization. The hyperbolicity-preserving SG formulation in \cite{dai2021hyperbolicity} leads to well-balanced finite-volume-type schemes, whereas here we develop a high-order DG discretization based on a skew-symmetric SG formulation. In contrast to entropy-stable DG approaches based on Roe-variable transformations \cite{gerster2020entropies,bender2024entropy,offner2026high}, the proposed method works directly with the hyperbolicity-preserving conserved-variable SG formulation, accommodates general orthonormal polynomial bases associated with the random input distribution, and uses a velocity projection in the DG discretization. This construction yields a DG-SG scheme that is simultaneously high-order, well-balanced, entropy conservative or entropy stable, and compatible with a practical hyperbolicity-preserving criterion.

An outline of this paper is as follows: In Section \ref{Sec2}, we review the polynomial chaos expansion (PCE) framework and develop a skew-symmetric stochastic Galerkin (SG) formulation of the shallow water equations (SWE) following \cite{dai2021hyperbolicity}. In Section \ref{Sec3}, we develop a DG-SG formulation for the skew-symmetric SG system introduced in Section \ref{Sec2}, and we construct well-balanced, entropy conservative, and entropy stable numerical fluxes for our proposed DG-SG scheme. In Section \ref{Sec4}, we provide algorithm details of our scheme for practical implementation. In Section \ref{Sec5}, we present several challenging numerical examples to demonstrate the performance of the proposed schemes and verify their theoretical properties. In Section \ref{Sec6} we summarize the main results and discuss possible directions.

\section{Skew-symmetric stochastic Galerkin formulation of shallow water equations}\label{Sec2}
In this section, we introduce a skew-symmetric stochastic Galerkin formulation of the shallow water equations (SWE), which possesses a hyperbolicity-preserving property with a reasonable condition \cite{dai2021hyperbolicity}.

We begin with the stochastic SWE:
\begin{align}\label{stochasticSWE}
    U_t(x,t,\xi) + F(U)_x &= S(U), & U &= [h,q]^\top,
\end{align}
where $F(U)$ represents the flux and $S(U)$ is the source term:
\begin{align}
    F(U) &= \begin{pmatrix}
        q \\ \frac{q^2}{h}+ \frac{gh^2}{2}
    \end{pmatrix}, & S(U) & = \begin{pmatrix}
        0 \\ -ghB_x
    \end{pmatrix},
\end{align}
Here $U = [h,q]^\top$ is the vector of conservative variables, $h = h(x,t,\xi)$ is the water height, and $q=q(x,t,\xi)$ is the discharge, and $B=B(x,\xi)$ is the time-independent bottom topography. $\xi$ is a random field, which could result from uncertainty or ignorance of the inputs, for example, bottom topography and initial data. Therefore, our solution depends not only on the spatial and temporal variable $(x,t)$, but also on the stochastic random variable $\xi$. Our goal is to develop accurate, efficient, and robust numerical algorithms for the stochastic SWE \eqref{stochasticSWE}. We begin with some preliminaries for the stochastic model formulation.

\subsection{Polynomial chaos expansion}\label{Sec2_1}

In this subsection, we provide a brief review of polynomial chaos expansion (PCE). For further details, see \cite{debusschere2004numerical, sullivan2015introduction, xiu2010numerical}.
Let $\xi \in \mathbb{R}^d$ be a $d$-dimensional random variable with a Lebesgue density function $\rho$. Define the $L^2$-integrable function space associated with $\rho$ as follows:
\begin{equation}
    L_{\rho}^2 (\mathbb{R}^d) \coloneqq \left\{ f: \mathbb{R}^d \to \mathbb{R} \;\;\bigg|\;\; \left( \int_{\mathbb{R}^d} f^2(s)\rho(s) ds \right)^{1/2} < +\infty \right\}.
\end{equation}
Assuming that $\rho$ has finite polynomial moments of all orders, there exists a $d$-variate orthonormal polynomial basis $\{ \phi_k\}_{k=1}^{\infty}$ such that,
\begin{equation}
  \E [ \phi_k(\xi) \phi_l(\xi)] = \langle \phi_k, \phi_l \rangle_{\rho} \coloneqq \int_{\mathbb{R}^d} \phi_k(s)\phi_l(s)\rho(s)ds = \delta_{k,l}, \quad \forall k,l \in \mathbb{N}, \quad \phi_1(\xi) \equiv 1, 
\end{equation}
where $\delta_{k,l}$ is the Kronecker delta. Further, under mild conditions \cite{ernst2012convergence}, then these basis functions span $L^2_{\rho}$: For any $z \in L_{\rho}^2$, then
\begin{align}
  z(x,t,\xi) &\overset{L_{\rho}^2}{=} \sum_{k=1}^{\infty} \widehat{z}_k(x,t)\phi_k(\xi), & 
  \widehat{z}_k &= \left\langle z, \phi_k \right\rangle_{\rho},
\end{align}
where $x$, $t$ are deterministic spatial and temporal variables, and $\widehat{z}_k(x,t)$ are the deterministic Fourier-type coefficients corresponding to the orthonormal basis $\{ \phi_k\}_{k=1}^{\infty}$. Numerical computations require finite truncations of these expansions. Let $P = \text{span}\{ \phi_k, k=1,2,\dots,K\} $ be a $K$-dimensional polynomial subspace of $ L_{\rho}^2$. We then define the $K$-term PCE approximation of a random field $z$ on this subspace as follows:
\begin{equation}\label{PCE-truncated}
    \Pi_P[z](x,t,\xi) \coloneqq \sum_{k=1}^{K} \widehat{z}_k(x,t)\phi_k(\xi).
\end{equation}
The statistics of $\Pi_P[z]$ can be derived from its expansion coefficients. Specifically, the mean and variance of the random field $z$ can be expressed in terms of these coefficients as follows:
\begin{align}
  \mathbb{E}[\Pi_P[z](x,t,\xi)  ] &= \widehat{z}_1(x,t), & \text{Var}[\Pi_P[z](x,t,\xi) ] &= \sum_{k=2}^K \widehat{z}_k^2(x,t).
\end{align}
Our numerical schemes involve specific manipulations of truncated expansion coefficients. Let $\widehat{z} = (\widehat{z}_1,\dots,\widehat{z}_K)^\top \in \mathbb{R}^K$ be the vector of truncated $K-$term PCE coefficients of $z$, then define the linear operator $\mathcal{P}:\mathbb{R}^K \to \mathbb{R}^{K\times K}$ as follows:
\begin{align}
  \mathcal{P}(\widehat{z}) &\coloneqq \sum_{k=1}^K \widehat{z}_k \mathcal{M}_k, & 
  \mathcal{M}_k &\in \mathbb{R}^{K\times K}, & 
  (\mathcal{M}_k)_{l,m} &\coloneqq \langle \phi_k,\phi_l\phi_m  \rangle_{\rho}.
\end{align}
This operator $\mathcal{P}(\cdot)$ satisfies the symmetry and commutativity properties as follows:
\begin{align}\label{commutative_prop}
  \mathcal{P}(\widehat{z}) &= (\mathcal{M}_1 \widehat{z},\mathcal{M}_2 \widehat{z},\dots, \mathcal{M}_K \widehat{z} ), & 
  \mathcal{P}(\widehat{a})\widehat{b} &= \mathcal{P}(\widehat{b})\widehat{a}, &
  \widehat{b}^\top \mathcal{P}(\widehat{a}) &= \widehat{a}^\top \mathcal{P}(\widehat{b}),
\end{align}
The last two properties are proved in \cite[Lemma 2.1]{dai2024energy}, using the definition and symmetry of $\mathcal{P}$.
A stochastic Galerkin (SG) formulation of a $\xi$-parameterized partial differential equation (PDE) assumes that the state variable lies in the polynomial space $P$ and forms a scheme corresponding to projecting the PDE residual onto $P$. Note that the hyperbolicity of the straightforward SG formulation for nonlinear hyperbolic PDEs, such as the shallow water equations, is not automatically guaranteed. Therefore, special designs are required for the SG formulation of these nonlinear hyperbolic PDEs to preserve such an important property.

\subsection{Hyperbolicity-preserving stochastic Galerkin formulation (SG) for shallow water equations (SWE)}\label{Sec2_2}

In this subsection, we review existing results on the hyperbolicity-preserving stochastic Galerkin formulation of shallow water equations (SGSWE) \cite{dai2021hyperbolicity}. We follow a standard Galerkin procedure in the stochastic space. It begins with reducing the problem to an alternative finite-dimensional form by replacing the solutions $(h,q)$ by the ansatz,
\begin{equation}\label{K-term-PCE}
\begin{split}
  h \simeq h_P &\coloneqq \sum_{k \in [K]} \widehat{h}_k(x,t)\phi_k(\xi),\\
   q \simeq q_P &\coloneqq \sum_{k \in [K]} \widehat{q}_k(x,t)\phi_k(\xi),
\end{split}
\end{equation}
respectively, and the bottom $B$ by $\Pi_P[B]$ defined in \eqref{PCE-truncated}, where notation $[K] \coloneqq \{1,2,...,K\}$. With a special choice of how the Galerkin projection is applied to the nonlinear, non-polynomial term $q^2/h$ introduced in \cite{dai2021hyperbolicity}, a SG system of balance laws was derived, with state variables being the coefficients in \eqref{K-term-PCE}
\begin{equation}\label{SGSWE}
    \widehat{U}_t + \widehat{F}(\widehat{U})_x = \widehat{S}(\widehat{U},\widehat{B}),
\end{equation}
where $\widehat{U} = \big(  \widehat{h}^\top, \widehat{q}^\top  \big)^\top \in \mathbb{R}^{2K}$, and $\widehat{h} , \widehat{q}$ are length-$K$ vectors whose entries are the coefficients in \eqref{K-term-PCE}. The flux and source term are defined by
\begin{align}\label{SG_flux_source}
    \widehat{F}(\widehat{U}) & = \begin{pmatrix}
        \widehat{q} \\ \mathcal{P}(\widehat{q})\mathcal{P}^{-1}(\widehat{h})\widehat{q} + \frac{1}{2}g \mathcal{P}(\widehat{h})\widehat{h}
    \end{pmatrix}, & \widehat{S}(\widehat{U},\widehat{B}) & = \begin{pmatrix}
        0 \\ -g\mathcal{P}(\widehat{h})\widehat{B}_x
    \end{pmatrix}.
\end{align}
In the deterministic case, the flux and source term \eqref{SG_flux_source} reduce to their corresponding deterministic forms. Recall the deterministic SWE is hyperbolic under the condition of positive water height ($h>0$). There is a natural extension of this property to the SG system \eqref{SGSWE}.

\begin{theorem}[Theorem 3.1 in \cite{dai2021hyperbolicity}]\label{hyperbolicitySG}
    If the matrix $\mathcal{P}(\widehat{h})$ is strictly positive definite at every point $(x,t)$ in the computational spatial-temporal domain, then the SG formulation \eqref{SGSWE} is hyperbolic.
\end{theorem}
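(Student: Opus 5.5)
We compute the flux Jacobian of \eqref{SGSWE} and show that it is similar to a symmetric matrix, so it has real eigenvalues and a complete set of eigenvectors. Write $H=\mathcal{P}(\widehat{h})$, which is symmetric positive definite by assumption, and set $Q=\mathcal{P}(\widehat{q})$. Introduce the vector $\widehat{u}\coloneqq H^{-1}\widehat{q}$, the SG analogue of the velocity, and $V=\mathcal{P}(\widehat{u})$. The commutativity property \eqref{commutative_prop} gives $Q=\mathcal{P}(H\widehat u)$, and more usefully $\widehat q = H\widehat u = V\widehat h$. The momentum flux is then
\[
\mathcal{P}(\widehat q)H^{-1}\widehat q=Q\widehat u=\mathcal{P}(\widehat u)\widehat q = V\widehat q .
\]

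To differentiate, I perturb $\widehat h\to\widehat h+\delta h$ and $\widehat q\to\widehat q+\delta q$. The resulting change in $\widehat u$ satisfies $H\,\delta u=\delta q-\mathcal{P}(\delta h)\widehat u=\delta q-V\delta h$. The change in the momentum flux is
\[
V\delta q+\mathcal{P}(\delta u)\widehat q=V\delta q+Q\,\delta u .
\]
Here I use $\mathcal{P}(\delta u)\widehat q=\mathcal{P}(\widehat q)\delta u$. The gravity term $\tfrac12 g\mathcal{P}(\widehat h)\widehat h$ has derivative $gH$, by bilinearity and symmetry. The Jacobian is therefore
\[
A=\begin{pmatrix}0 & I\\ gH - QH^{-1}V & V+QH^{-1}\end{pmatrix}.
\]

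The key step is to symmetrize $A$. A convenient observation is $QH^{-1}=$? Since $Q$ and $H$ need not commute, I instead seek a symmetrizer of the form of the entropy Hessian. Take the change of variables $T=\begin{pmatrix} I&0\\ V & H\end{pmatrix}$, which maps $(\delta h,\delta u)\mapsto(\delta h,\delta q)$. This map is invertible because $H>0$. In these variables the system becomes
\[
\delta h_t+V\delta h_x+H\delta u_x=0 .
\]
For the momentum equation, I expect to obtain $H\delta u_t+gH\delta h_x+(\dots)\delta u_x=0$, using the identity $\mathcal{P}(\widehat u)H=\mathcal{P}(\widehat h)\mathcal{P}(\widehat u)$ type relations only where they hold (they hold in the form $\mathcal{P}(a)b=\mathcal{P}(b)a$). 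The expected result is a quasilinear system $A_0 W_t+A_1 W_x=0$, with $A_0=\mathrm{diag}(gI,H)$ symmetric positive definite and $A_1$ symmetric. The blocks of $A_1$ should be $gV$, $gH$, $gH$, and the symmetric matrix $\mathcal{M}$ arising from the $\delta u_x$ coefficient, which I expect to be $\sum_k \widehat u$-weighted products like $HV$ symmetrized. Equivalently, $A$ is similar to $A_0^{-1/2}A_1A_0^{-1/2}$, which is symmetric.

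The main obstacle is verifying that the $\delta u$–$\delta u$ block is symmetric. This block comes from the choice $\mathcal{P}(\widehat q)\mathcal{P}^{-1}(\widehat h)\widehat q$ of projecting $q^2/h$. If direct computation fails, I would fall back on entropy: the SG energy $E=\tfrac12\widehat q^\top H^{-1}\widehat q+\tfrac g2\widehat h^\top\widehat h$ is convex when $H>0$, since it is a matrix-fractional function. I would then verify that it is an entropy, with a flux, for \eqref{SGSWE}. By the Godunov–Mock theorem, the existence of a strictly convex entropy symmetrizes the system. Its Hessian is positive definite in $(\widehat h,\widehat q)$, because the matrix-fractional term is jointly convex and the gravity term contributes the strictly positive $gI$. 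The Jacobian is then similar to a symmetric matrix, which gives real diagonalizability, that is, hyperbolicity.
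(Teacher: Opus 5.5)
Your main route is the one the paper relies on: it cites Dai et al. and describes the proof as finding a symmetrizing similarity transform of the flux Jacobian. Your Jacobian agrees with \eqref{SG_Jacobian}, and passing to $(\widehat h,\widehat u)$ with $A_0=\mathrm{diag}(gI,H)$ is exactly such a transform. As written, though, the proof is incomplete. You stop at the step you call the main obstacle, and your guess for the $\widehat u$--$\widehat u$ block (some symmetrized $HV$) is not what comes out.

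The step is in fact immediate. By \eqref{commutative_prop}, $\widehat q_t=V\widehat h_t+H\widehat u_t$ and $(V\widehat q)_x=V\widehat q_x+\mathcal{P}(\widehat u_x)\widehat q=V\widehat q_x+Q\widehat u_x$. The momentum equation therefore reads $V(\widehat h_t+\widehat q_x)+H\widehat u_t+gH\widehat h_x+Q\widehat u_x=0$, and the first term vanishes by the mass equation. Hence
\[
A_1=\begin{pmatrix} gV & gH\\ gH & Q\end{pmatrix},\qquad T^{-1}AT=A_0^{-1}A_1=\begin{pmatrix} V & H\\ gI & H^{-1}Q\end{pmatrix}.
\]
$A_1$ is symmetric simply because every $\mathcal{P}(\cdot)$ is symmetric. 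Conjugating $T^{-1}AT$ by $\mathrm{diag}(\sqrt{g}\,I,H^{1/2})$ gives an explicitly symmetric matrix with blocks $V$, $\sqrt{g}H^{1/2}$, $\sqrt{g}H^{1/2}$, $H^{-1/2}QH^{-1/2}$. With this line inserted, your proof is complete. No symmetrization of $HV$ is needed, and in general $\mathcal{P}(\widehat q)=\mathcal{P}(H\widehat u)$ differs from $\tfrac12(HV+VH)$ once the truncation in $\mathcal{P}$ matters.

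Your entropy fallback is also correct, and it produces the same symmetrizer. $E$ is the flat-bottom $\widehat E$ of Remark \ref{RK2}, whose companion balance law supplies the entropy flux that Godunov--Mock requires. From $\nabla E=(g\widehat h-\tfrac12 V\widehat u,\ \widehat u)$ one computes $T^\top\nabla^2E\,T=\mathrm{diag}(gI,H)=A_0$. This settles strict positive definiteness of the Hessian directly. Your joint-convexity remark, by contrast, only yields semidefiniteness unless you also note that the $\widehat q\widehat q$ block $H^{-1}$ is positive definite. The direct computation is shorter. The entropy view explains why the symmetrizer exists and ties it to the energy used in Section \ref{Sec3}.
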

This result is proven by identifying a symmetrizing similarity transform of the SG SWE flux Jacobians $\frac{\partial \widehat{F}}{\partial \widehat{U}}$. This flux Jacobian will be useful for us later, so we explicitly provide it below: When $\mathcal{P}(\widehat{h})$ is invertible, the well-defined velocity 
\begin{align}\label{velocity_coeff}
  \widehat{u} &\coloneqq  \mathcal{P}^{-1}(\widehat{h})\widehat{q},
\end{align}
is a stochastic representation of the velocity variable, i.e., this term can be interpreted as the vectors of the PCE coefficients of the velocity $u \coloneqq q/h$. The flux Jacobian of the SG SWE system \eqref{SGSWE} can then be expressed in terms of $K \times K$ blocks as follows:
\begin{equation}\label{SG_Jacobian}
        \frac{\partial \widehat{F}}{\partial \widehat{U}} = \begin{pmatrix}
            0& I \\
            g\mathcal{P}(\widehat{h}) - \mathcal{P}(\widehat{q})\mathcal{P}^{-1}(\widehat{h})\mathcal{P}(\widehat{u}) & \mathcal{P}(\widehat{q})\mathcal{P}^{-1}(\widehat{h}) + \mathcal{P}(\widehat{u})   \\ 
        \end{pmatrix}.
\end{equation}

\subsection{Skew-symmetric stochastic Galerkin formulation for shallow water equations (SGSWE)}

In this subsection, we introduce a skew-symmetric form of the SGSWE, which also preserves hyperbolicity. However, the skew-symmetric SGSWE is not a standard conservation/balance law, which motivates the following two definitions.

\begin{definition}[Hyperbolicity for first-order PDE]\label{def_hyper}
    A first-order system of partial differential equations is said to be hyperbolic if it can be written, via an invertible linear transformation, in the quasilinear form
\begin{equation}\label{quasilinear_system1}
    U_t + A(U) U_x = S(U),
\end{equation}
where the transformation consists of left multiplication by an invertible matrix $L(U)$ that depends only on the state variable and involves no spatial or temporal derivatives. The system \eqref{quasilinear_system1} is called hyperbolic if the matrix $A(U)$ is diagonalizable with real eigenvalues \cite{lax1973hyperbolic,leveque2002finite}.
\end{definition}

\begin{definition}
    For an arbitrary first-order (in time and space) system of partial differential equations, it has an associated companion balance law if it can be transformed to a conservation law 
    \begin{equation}\label{conservation_system1}
        U_t + F(U)_x = S(U),
    \end{equation}
    where the transformation consists of left multiplication by an invertible matrix $L(U)$ that depends only on the state variable and involves no spatial or temporal derivatives. Assume $U$ is smooth, and it has an associated companion balance law. We say \eqref{conservation_system1} has a companion balance law if there exists an entropy flux pair $\big(E(U), H(U)\big)$ satisfying
    \begin{equation}
        E(U)_t + H(U)_x = 0,
    \end{equation}
   where $E(U)$ is convex on $U$.
\end{definition}

\begin{remark}
    In the deterministic case, the skew-symmetric SWE is
    \begin{equation}\label{SWE_skew1}
    \begin{aligned}
       &  h_t + (hu)_x  &&= 0 \\
   & (hu)_t + (hu^2)_x + gh(h+b)_x - ( \frac{1}{2}h_tu + \frac{1}{2}(hu)_x u) && = 0,
    \end{aligned}
\end{equation}
which can be obtained by applying a linear operator $L:\mathbb{R}^2 \to \mathbb{R}^2$ to 
\begin{equation}\label{SWE_1}
    \begin{aligned}
      &  h_t + q_x& &= 0\\
      &  q_t + (\frac{1}{2}gh^2 + hu^2)_x  & &= -ghb_x,
    \end{aligned}
\end{equation}
where the matrix $L(U) = \begin{pmatrix}
    1 & 0 \\-\frac{1}{2}u & 1
\end{pmatrix},$ which is invertible. Hence, the skew-symmetric SWE \eqref{SWE_skew1} is hyperbolic if \eqref{SWE_1} is hyperbolic, which is true when $h>0$. In addition, let $V \coloneqq [g(h+b) - \frac{1}{2}u^2, u]^\top$ be the entropy variable. Multiplying \eqref{SWE_1} by $V^\top$, we can recover the companion balance law
\begin{align}\label{SWE_companionbl}
    E(U)_t + H(U)_x = 0,
\end{align}
where $E(U) = \frac{1}{2}(hu^2 + gh^2) + ghb$, and $H(U) = \frac{1}{2}hu^3 + ghu(h+b).$ Let the modified entropy variable $\Tilde{V} \coloneqq   (L^\top)^{-1}V = [g(h+b),u]^\top.$ Multiplying the skew symmetric SWE \eqref{SWE_skew1} by $\Tilde{V}^\top$, we can also recover the same companion balance law \eqref{SWE_companionbl}. 

\textbf{Note:} $\Tilde{V} \neq (\frac{\partial E}{\partial (LU)})^\top$, since $L$ may be related to $U$. In the case of SWE, $L$ involves velocity $u$, which is related to the state variable $U$. Instead, we can define $\Tilde{V} =  (L^\top)^{-1}V =  (L^\top)^{-1}(\frac{\partial E}{\partial U})^\top$. It follows that
\begin{equation}
    \Tilde{V}^\top (LU_t) = \frac{\partial E}{\partial U}(L)^{-1}LU_t = \frac{\partial E}{\partial U} U_t = E_t.
\end{equation}

\end{remark}

We can then derive the skew-symmetric SGSWE, starting from the skew-symmetric stochastic SWE:

\begin{equation}\label{skewsym_stochasticSWE}
    \begin{aligned}
    &h_t(x,t,\xi) + q_x(x,t,\xi) & = 0,\\
    & q_t(x,t,\xi) + (hu^2)_x(x,t,\xi) + gh(h+B)_x(x,t,\xi) - \frac{1}{2}h_tu(x,t,\xi) - \frac{1}{2}q_xu(x,t,\xi) & =  0,
\end{aligned}
\end{equation}
where $u \coloneqq \frac{q}{h}$. This skew-symmetric system \eqref{skewsym_stochasticSWE} can be derived by multiplying the first equation of \eqref{stochasticSWE} by $\frac{u}{2}$, and subtracting it from the second equation of \eqref{stochasticSWE}. The skew-symmetric form of the shallow water equations has been widely employed in recent studies to derive entropy-stable DG discretizations. In this work, we extend the idea of \cite{fu2022high} to the stochastic case, which requires great effort and more details.

We first consider stochastic discretization on \eqref{skewsym_stochasticSWE}, which is based on the idea of the hyperbolicity-preserving stochastic Galerkin formulation of shallow water equations \cite{dai2021hyperbolicity}. We follow a standard Galerkin procedure in the stochastic space, which starts from reducing the problem to an alternative finite-dimensional form by replacing the solutions $(h,q)$ by the ansatz,
\begin{align}\label{sol-PCE}
    h\simeq h_{s}(x,t,\xi) &= \sum_{j \le K} \widehat{h}_{j}(x,t)\phi_j(\xi) &   q\simeq q_{s}(x,t,\xi) &= \sum_{j \le K} \widehat{q}_{j}(x,t)\phi_j(\xi),
\end{align}
where $\widehat{h}_{j}, \widehat{q}_{j}$ are the truncated PCE coefficients of the SG solutions depending on the physical variable $x$ and the time variable $t$. In addition, the bottom $B$ is replaced by $\Pi_P[B]$. With a special choice of how the Galerkin projection is applied to the nonlinear, non-polynomial term $q^2/h$ introduced in \cite{dai2021hyperbolicity}, a SG system of balance laws was derived, whose state variables are the coefficients in \eqref{sol-PCE}

\begin{equation}\label{SGSWE_skew}
    \begin{aligned}
         & \widehat{h}_t + \widehat{q}_x & &= 0\\
      & \widehat{q}_t + \left( \mathcal{P}(\widehat{q})\widehat{u} \right)_x + g\mathcal{P}(\widehat{h})(\widehat{h}+\widehat{B})_x - \frac{1}{2}\mathcal{P}(\widehat{h}_t)\widehat{u} - \frac{1}{2}\mathcal{P}(\widehat{q}_x)\widehat{u} & &=0,
    \end{aligned}
\end{equation}
where $\widehat{u} \coloneqq \mathcal{P}^{-1}(\widehat{h})\widehat{q}$. In the deterministic case, the stochastic Galerkin system \eqref{SGSWE_skew} reduces to the deterministic skew-symmetric SWE \eqref{SWE_skew1}, and the hyperbolicity condition reduces to $h$ being positive. There
is a natural extension of this property to the SG formulation of the SWE. 
\begin{theorem}[Hyperbolicity of skew-symmetric SGSWE]\label{hyperbolicitySG_skew}
    If the matrix $\mathcal{P}(\widehat{h})$ is strictly positive definite at every point $(x,t)$ in the computational spatial-temporal domain, then the SG formulation \eqref{SGSWE_skew} is hyperbolic.
\end{theorem}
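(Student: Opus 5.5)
The plan is to show that the skew-symmetric system \eqref{SGSWE_skew} is obtained from the conservative SG system \eqref{SGSWE} by left multiplication with an invertible, state-dependent block matrix $L(\widehat{U})$, as allowed by Definition~\ref{def_hyper}. Hyperbolicity then follows from Theorem~\ref{hyperbolicitySG}, exactly as in the deterministic Remark.

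\textbf{Step 1: identify the transformation.} The natural candidate is
\begin{equation*}
L(\widehat{U}) = \begin{pmatrix} I & 0 \\ -\tfrac{1}{2}\mathcal{P}(\widehat{u}) & I \end{pmatrix},
\end{equation*}
which is block lower triangular with identity diagonal blocks, so it is invertible. It depends only on the state through $\widehat{u}=\mathcal{P}^{-1}(\widehat{h})\widehat{q}$, which is well defined because $\mathcal{P}(\widehat{h})$ is positive definite. The commutativity property \eqref{commutative_prop} gives $\mathcal{P}(\widehat{h}_t)\widehat{u} = \mathcal{P}(\widehat{u})\widehat{h}_t$ and $\mathcal{P}(\widehat{q}_x)\widehat{u} = \mathcal{P}(\widehat{u})\widehat{q}_x$. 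Hence the two correction terms in \eqref{SGSWE_skew} are exactly $-\tfrac12\mathcal{P}(\widehat{u})$ applied to the first equation $\widehat{h}_t+\widehat{q}_x$.

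\textbf{Step 2: match the remaining terms.} The flux terms must agree with \eqref{SGSWE}:
\begin{itemize}
\item $\big(\mathcal{P}(\widehat{q})\widehat{u}\big)_x$ is literally the nonlinear flux term $\big(\mathcal{P}(\widehat{q})\mathcal{P}^{-1}(\widehat{h})\widehat{q}\big)_x$.
\item For the pressure term, linearity and symmetry of $\mathcal{P}$ give $\big(\tfrac12\mathcal{P}(\widehat{h})\widehat{h}\big)_x = \tfrac12\mathcal{P}(\widehat{h}_x)\widehat{h}+\tfrac12\mathcal{P}(\widehat{h})\widehat{h}_x = \mathcal{P}(\widehat{h})\widehat{h}_x$, using \eqref{commutative_prop}.
\item Combining the pressure term with the source $g\mathcal{P}(\widehat{h})\widehat{B}_x$ yields $g\mathcal{P}(\widehat{h})(\widehat{h}+\widehat{B})_x$.
\end{itemize}
Therefore, for smooth solutions, \eqref{SGSWE_skew} reads $L(\widehat{U})\big[\widehat{U}_t + \widehat{F}(\widehat{U})_x - \widehat{S}\big]=0$.

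\textbf{Step 3: conclude via the quasilinear form.} Writing \eqref{SGSWE} as $\widehat{U}_t + \frac{\partial \widehat{F}}{\partial \widehat{U}}\widehat{U}_x = \widehat{S}$ with the Jacobian \eqref{SG_Jacobian}, the skew-symmetric system is this quasilinear system multiplied on the left by the invertible matrix $L$. Conversely, multiplying \eqref{SGSWE_skew} by $L^{-1}$ recovers the form \eqref{quasilinear_system1} with $A=\frac{\partial \widehat{F}}{\partial \widehat{U}}$. By Theorem~\ref{hyperbolicitySG}, this $A$ is diagonalizable with real eigenvalues whenever $\mathcal{P}(\widehat{h})$ is strictly positive definite. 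So \eqref{SGSWE_skew} is hyperbolic in the sense of Definition~\ref{def_hyper}.

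\textbf{Main obstacle.} The delicate step is the bookkeeping in Step 1. One must verify that $\tfrac12\mathcal{P}(\widehat{h}_t)\widehat{u}$ and $\tfrac12\mathcal{P}(\widehat{q}_x)\widehat{u}$ are truly a state-dependent matrix times the first equation, with no derivatives in $L$ itself. This hinges entirely on the identity $\mathcal{P}(\widehat{a})\widehat{b}=\mathcal{P}(\widehat{b})\widehat{a}$ from \eqref{commutative_prop}.
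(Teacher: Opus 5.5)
Your proposal is correct and follows the paper's proof. The paper uses exactly the same block lower-triangular transformation $\widehat{L}_M=\begin{pmatrix} I & 0\\ -\frac12\mathcal{P}(\widehat{u}) & I\end{pmatrix}$ applied to \eqref{SGSWE}, and concludes from its invertibility together with Theorem~\ref{hyperbolicitySG}; your Steps 1--2 spell out the term-by-term matching via $\mathcal{P}(\widehat{a})\widehat{b}=\mathcal{P}(\widehat{b})\widehat{a}$ that the paper leaves implicit (your pressure-term identity drops the factor $g$, but that is only a typo).
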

\begin{proof}
    Since the skew-symmetric SGSWE \eqref{SGSWE_skew} is not in the standard form of conservation/balance laws, to discuss its hyperbolicity, we need to recall the definition of hyperbolicity for first-order PDE in Definition \ref{def_hyper}. The skew-symmetric SGSWE \eqref{SGSWE_skew} can be obtained by applying a linear operator $\widehat{L}$ to the SGSWE \eqref{SGSWE}. Here $\widehat{L}: \mathbb{R}^{2K} \to \mathbb{R}^{2K}$,
$$\widehat{L}\begin{pmatrix}
    \widehat{x} \\ \widehat{y}
\end{pmatrix} \coloneqq \begin{pmatrix}
    \widehat{x} \\ \widehat{y}-\frac{1}{2}\mathcal{P}(\widehat{x})\widehat{u}
\end{pmatrix}, \quad \widehat{x}, \widehat{y} \in \mathbb{R}^{K}.$$ Equivalently, the matrix of $\widehat{L}$ is defined by
\begin{equation} 
\widehat{L}_M = 
    \begin{pmatrix}
       I & 0 \\ -\frac{1}{2}\mathcal{P}(\widehat{u}) & I
    \end{pmatrix}.
\end{equation}
Since $\widehat{L}$ is an invertible linear operator, the skew-symmetric SGSWE \eqref{SGSWE_skew} is hyperbolic if \eqref{SGSWE} is hyperbolic, which is true when $\mathcal{P}(\widehat{h})$ is symmetric positive definite.
\end{proof}

\begin{remark}\label{RK2}
    Let $\widehat{V} \coloneqq [-\frac{1}{2}\widehat{u}^\top\mathcal{P}(\widehat{u}) + g(\widehat{h}+\widehat{B})^\top,\widehat{u}^\top]^\top$. Multiplying the SGSWE \eqref{SGSWE} by $\widehat{V}^\top,$ we can recover the stochastic companion balance law
\begin{equation}\label{SGSWE_companionbl}
    \widehat{E}(\widehat{U})_t + \widehat{H}(\widehat{U})_x = 0,
\end{equation}
where $\widehat{E}(\widehat{U}) = \frac{1}{2}(\widehat{q}^\top \widehat{u} + g\| 
\widehat{h} \|^2) + g\widehat{h}^\top\widehat{B},$ and $\widehat{H}(\widehat{U}) = \frac{1}{2}\widehat{u}^\top \mathcal{P}(\widehat{q})\widehat{u} + g\widehat{q}^\top(\widehat{h}+\widehat{B}).$ In addition, let the modified stochastic entropy variable $\Tilde{\widehat{V}} \coloneqq (\widehat{L}^*)^{-1}\widehat{V} = [g(\widehat{h}+\widehat{B})^\top, \widehat{u}^\top]^\top.$ Multiplying the skew symmetric SGSWE \eqref{SGSWE_skew} by $\left(\Tilde{\widehat{V}}\right)^\top$, we can also recover the same stochastic companion balance law \eqref{SGSWE_companionbl}.\\

\end{remark}

\section{Entropy conservative, entropy stable, and well-balanced discontinuous Galerkin formulation for skew-symmetric SGSWE}\label{Sec3}

\subsection{Discontinuous Galerkin--stochastic Galerkin formulation for skew-symmetric stochastic shallow water equations}
In this section, we derive a discontinuous Galerkin--stochastic Galerkin (DG-SG) formulation for the skew-symmetric stochastic shallow water equations. In this section and the rest of this paper, we use the subscript $\Delta x$ for quantities after spatial discretization.

Let the physical domain $\Omega$ be periodic, partitioned into elements 
$\mathcal{I}_i = [x_{i-1/2}, x_{i+1/2}]$. Let $\Delta x_i \coloneqq |\mathcal{I}_i|$ denote the size of each element. Let $\{\mathcal{E}_i\}$ denote the set of all cell interfaces. For any polynomial degree $k \ge 0$, let
\begin{equation}
    V_{\Delta x}^k \coloneqq \{ v \in L^2(\Omega): v|_{\mathcal{I}_i} \in \mathbb{P}_k(\mathcal{I}_i), \forall \mathcal{I}_i \subset \Omega\},
\end{equation}
where $\mathbb{P}_k(\mathcal{I}_i)$ is the space of polynomials of degree at most $k$ on the element $\mathcal{I}_i$. Let ${\bf V}_{\Delta x}^k$ be the vectorized version of the space $V_{\Delta x}^k$. At each interface $x_{i+1/2} = \mathcal{I}_i \cap \mathcal{I}_{i+1}$, we define average and jump quantities by

\begin{align}\label{average_jump}
   & \{ a\}_{i+1/2} \coloneqq \frac{a|_{i+1/2}^+ + a|_{i+1/2}^-}{2}, & \llbracket a \rrbracket_{i+1/2} \coloneqq a|_{i+1/2}^+ - a|_{i+1/2}^-,
\end{align}
where $a|_{i+1/2}^+ \coloneqq \lim_{x\to x_{i+1/2}^+} a|_{\mathcal{I}_{i+1}}(x)$ and $a|_{i+1/2}^- \coloneqq \lim_{x\to x_{i+1/2}^-} a|_{\mathcal{I}_{i}}(x)$.

The semi-discrete DG-SG formulation of \eqref{SGSWE_skew} reads as follows: find DG solutions $(\widehat{h}_{\Delta x}, \widehat{q}_{\Delta x}, \widehat{u}_{\Delta x}) \in {\bf V}_{\Delta x}^k$, such that
\begin{equation}\label{DG-SGSWE_eq1_velocity_correction}
\begin{aligned}
\sum_{\mathcal{I}_i \subset \Omega} \int_{\mathcal{I}_i} (\widehat{h}_{\Delta x, j})_t \widehat{e}_{\Delta x,j}\,dx
- \sum_{\mathcal{I}_i \subset \Omega} \int_{\mathcal{I}_i} \widehat{q}_{\Delta x, j} (\widehat{e}_{\Delta x,j})_x\,dx
- \sum_{\mathcal{E}_i} (\widehat{f}_q)_j  \llbracket \widehat{e}_{\Delta x,j}\rrbracket
= 0,
\end{aligned}
\end{equation}
for all $\widehat{e}_{\Delta x,j} \in V_{\Delta x}^k$, $j=1,\dots,K$, and
\begin{equation}\label{DG-SGSWE_eq2_velocity_correction}
\begin{aligned}
&\sum_{\mathcal{I}_i \subset \Omega} \int_{\mathcal{I}_i} (\widehat{q}_{\Delta x, j})_t \widehat{v}_{\Delta x,j}\,dx
- \sum_{\mathcal{I}_i \subset \Omega} \int_{\mathcal{I}_i} \big(\mathcal{P}(\widehat{q}_{\Delta x})\widehat{u}_{\Delta x}\big)_j (\widehat{v}_{\Delta x,j})_x\,dx
- \sum_{\mathcal{E}_i} (\widehat{f}_{qu})_j  \llbracket \widehat{v}_{\Delta x,j}\rrbracket \\
&+ \sum_{\mathcal{I}_i \subset \Omega} \int_{\mathcal{I}_i} g\big(\mathcal{P}(\widehat{h}_{\Delta x}+\widehat{B}_{\Delta x})_x \widehat{v}_{\Delta x,j}\widehat{h}_{\Delta x} \big)_j\,dx 
+ \sum_{\mathcal{E}_i} \Big(g \mathcal{P}(\llbracket \widehat{h}_{\Delta x} + \widehat{B}_{\Delta x} \rrbracket)
\{\widehat{v}_{\Delta x,j}\widehat{h}_{\Delta x}\} \Big)_j \\
&- \sum_{\mathcal{I}_i \subset \Omega} \int_{\mathcal{I}_i} \frac{1}{2}\big(\mathcal{P}(\widehat{h}_{\Delta x})_t \widehat{u}_{\Delta x} \big)_j \widehat{v}_{\Delta x,j}\,dx 
+ \sum_{\mathcal{I}_i \subset \Omega} \int_{\mathcal{I}_i} \frac{1}{2} \Big( \mathcal{P}(\widehat{q}_{\Delta x})
(\widehat{v}_{\Delta x,j} \widehat{u}_{\Delta x})_x \Big)_j\,dx \\
&+ \sum_{\mathcal{E}_i} \frac{1}{2} \Big( \mathcal{P}(\widehat{f}_{q})
\llbracket \widehat{v}_{\Delta x,j} \widehat{u}_{\Delta x}\rrbracket  \Big)_j
= 0,
\end{aligned}
\end{equation}
for all $\widehat{v}_{\Delta x,j}\in V_{\Delta x}^k$, $j=1,\dots,K$, and
\begin{equation}\label{DG-SGSWE_eq3_velocity_correction}
\sum_{\mathcal{I}_i \subset \Omega} \int_{\mathcal{I}_i}
(\mathcal{P}(\widehat{h}_{\Delta x})\widehat{u}_{\Delta x} - \widehat{q}_{\Delta x})\widehat{w}_{\Delta x,j}\,dx
= 0,
\quad \forall \widehat{w}_{\Delta x,j}\in V_{\Delta x}^k,\; j=1,\dots,K.
\end{equation}

Here, $\widehat{h}_{\Delta x}(x,t)$ and $\widehat{q}_{\Delta x}(x,t)$ are vector-valued functions (the PCE coefficients vectors), whose $j-$th components are denoted by $\widehat{h}_{\Delta x, j}(x,t)$ and $\widehat{q}_{\Delta x, j}(x,t)$, respectively. The last equation \eqref{DG-SGSWE_eq3_velocity_correction} is introduced to define the velocity update $\widehat{u}_{\Delta x}$ such that $\widehat{q}_{\Delta x}$ is the $L^2$-projection of $\mathcal{P}(\widehat{h}_{\Delta x})\widehat{u}_{\Delta x}$ onto the DG approximation space $\bm{V}_{\Delta x}^k$. This relation allows $\widehat{u}_{\Delta x}$ to be used as a test function in the proposed DG-SG scheme. Moreover, \eqref{DG-SGSWE_eq3_velocity_correction} plays a key role in the derivation of the discrete entropy equation. The numerical fluxes $\widehat{f}_{q}$ and $\widehat{f}_{qu}$ will be specified later to ensure that the resulting DG-SG scheme satisfies the desired physical properties, including the well-balanced property and entropy stability.

\subsection{Entropy conservative, entropy stable, and well-balanced numerical fluxes}
In this subsection, we establish the entropy stability and well-balanced property of the proposed DG-SG formulation with appropriate choices of numerical fluxes. We start by defining the well-balanced property of a numerical scheme in the stochastic context, which ensures that stochastic ``lake-at-rest'' steady states are equilibrium states at the discrete level.

\begin{definition}[Well-Balanced DG-SG scheme for stochastic SWE]\label{def:wb}
The solution $(h_{\Delta x, s}, q_{\Delta x, s})$ of the proposed scheme \eqref{DG-SGSWE_eq1_velocity_correction}--\eqref{DG-SGSWE_eq3_velocity_correction} is said to be well-balanced if it satisfies the stochastic ``lake-at-rest'' solution,
\begin{align}\label{well-balance}
    q_{\Delta x, s}(x,t,\xi)  &\equiv 0,& h_{\Delta x, s}(x,t,\xi) + B_{\Delta x}(x,t,\xi)&\equiv C(\xi),
\end{align}
where $C(\xi)$ is a random scalar depending only on $\xi$.
  Such a well-balanced solution describes a still water surface with a flat but stochastic water surface. In terms of the system of PCE coefficients, \eqref{well-balance} is equivalent to the following vector equations
\begin{align}\label{well-balance-vector}
    \widehat{q}_{\Delta x} & \equiv {\bf 0}, & \widehat{h}_{\Delta x}+\widehat{B}_{\Delta x} &\equiv \widehat{C},& \forall (x,t) &\in \Omega \times [0,T],
\end{align}
with spatial domain $\Omega$ and terminal time $T$. Note that the vector equations \eqref{well-balance-vector} represent a steady state of the skew-symmetric SG SWE \eqref{SGSWE_skew}.
\end{definition}
Many geophysical flows, including lake perturbations and tsunami propagation, involve small deviations from the lake-at-rest equilibrium of the shallow water equations. In such regimes, it is essential for numerical schemes to be well-balanced to preserve steady states and avoid generating spurious waves that can dominate the true dynamics. In addition, to ensure physical admissibility of nonlinear weak solutions, it is also necessary to impose an entropy condition. We recover the stochastic companion balance law \eqref{SGSWE_companionbl} from the skew-symmetric SGSWE \eqref{SGSWE_skew}, and we therefore expect the semi-discrete numerical scheme to inherit the semi-discrete entropy conservative/stable property in a discrete sense.

\begin{definition}[Entropy conservative/stable DG-SG scheme]\label{def_ECES}
      The semi-discrete DG-SG solutions defined in \eqref{DG-SGSWE_eq1_velocity_correction}--\eqref{DG-SGSWE_eq3_velocity_correction}, with corresponding numerical fluxes, are said to be entropy conservative or stable if,
      \begin{align}\label{def_eq_ECES}
         \frac{d}{dt}E_{\Delta x} = 0, && \text{or,}& &\frac{d}{dt}E_{\Delta x} \le 0, 
      \end{align}
    respectively, where
\begin{equation}\label{entropy_DGSGSWE_velocity_correction}
    E_{\Delta x} = \sum_{\mathcal{I}\subset\Omega} \int_{\mathcal{I}} \Big(\frac{1}{2}g \| \widehat{h}_{\Delta x}\|^2_{l^2} + g\widehat{h}_{\Delta x}^\top\widehat{B}_{\Delta x} + \frac{1}{2}\widehat{q}_{\Delta x}^\top\widehat{u}_{\Delta x} \Big) dx.
\end{equation}
\end{definition}
Our main result is that we can ensure the entropy conservative/stable property through special selection of the numerical fluxes $\widehat{f}_q, \widehat{f}_{qu}$.
\begin{theorem}[Entropy conservative/stable and well-balanced DG-SG scheme]\label{EC/ES_thm}
   Suppose that the bottom topography function $B$ is independent of time. The semi-discrete DG-SG scheme \eqref{DG-SGSWE_eq1_velocity_correction}--\eqref{DG-SGSWE_eq3_velocity_correction} for the skew-symmetric SG SWE system \eqref{SGSWE_skew}, is well-balanced, and,
   \begin{itemize}
       \item entropy conservative, with the central fluxes
       \begin{align}\label{EC_flux}
            \widehat{f}_q &= \{ \mathcal{P}(\widehat{h}_{\Delta x}) \widehat{u}_{\Delta x}\},&   \widehat{f}_{qu} =  \mathcal{P}(\widehat{f}_q) \{\widehat{u}_{\Delta x}\} &= \mathcal{P}(\{ \mathcal{P}(\widehat{h}_{\Delta x}) \widehat{u}_{\Delta x}\})\{\widehat{u}_{\Delta x}\},
       \end{align}
       \item entropy stable, with the Lax-Friedrichs type numerical fluxes
       \begin{align}\label{ES_flux}
            \widehat{f}_q  &= \{\mathcal{P}(\widehat{h}_{\Delta x}) \widehat{u}_{\Delta x} \}  - \frac{1}{2}\alpha_1 \llbracket \widehat{h}_{\Delta x} + \widehat{B}_{\Delta x} \rrbracket, &  \widehat{f}_{qu}  &= \mathcal{P}(\{ \mathcal{P}(\widehat{h}_{\Delta x}) \widehat{u}_{\Delta x}\})\{ \widehat{u}_{\Delta x}\}  - \frac{1}{2}\alpha_1 \llbracket \mathcal{P}(\widehat{h}_{\Delta x} + \widehat{B}_{\Delta x})\widehat{u}_{\Delta x} \rrbracket, 
       \end{align}
       for any $\alpha_1 \ge 0$.
   \end{itemize}
\end{theorem}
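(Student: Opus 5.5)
Both properties follow from testing the scheme with suitable discrete functions, mimicking the continuous argument of Remark \ref{RK2}. There the skew-symmetric SGSWE \eqref{SGSWE_skew} is multiplied by the modified entropy variable $\Tilde{\widehat{V}}=[g(\widehat{h}+\widehat{B})^\top,\widehat{u}^\top]^\top$.

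\textbf{Well-balancedness.} Insert the lake-at-rest state $\widehat{q}_{\Delta x}\equiv 0$, $\widehat{h}_{\Delta x}+\widehat{B}_{\Delta x}\equiv\widehat{C}$ and check that every spatial term vanishes. From \eqref{DG-SGSWE_eq3_velocity_correction}, $\mathcal{P}(\widehat{h}_{\Delta x})\widehat{u}_{\Delta x}$ has zero $L^2$-projection. Since $\mathcal{P}(\widehat{h}_{\Delta x})$ is assumed positive definite (Theorem \ref{hyperbolicitySG_skew}), testing with $\widehat{w}=\widehat{u}_{\Delta x}$ gives $\int\widehat{u}_{\Delta x}^\top\mathcal{P}(\widehat{h}_{\Delta x})\widehat{u}_{\Delta x}\,dx=0$, hence $\widehat{u}_{\Delta x}\equiv0$. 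The remaining terms then vanish for both flux choices:
\begin{itemize}
\item the flux terms, because $\{\mathcal{P}(\widehat{h})\widehat{u}\}=0$ and $\llbracket\widehat{h}+\widehat{B}\rrbracket=\llbracket\widehat{C}\rrbracket=0$;
\item the hydrostatic volume term, because $(\widehat{h}+\widehat{B})_x=0$;
\item the hydrostatic interface term, because its jump is zero;
\item the $\widehat{u}$-terms, because $\widehat{u}_{\Delta x}\equiv0$.
\end{itemize}
Consequently $\widehat{h}_t=\widehat{q}_t=0$, and the state is an equilibrium.

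\textbf{Entropy identity.} Take $\widehat{e}_{\Delta x}=g(\widehat{h}_{\Delta x}+\widehat{B}_{\Delta x})$ in \eqref{DG-SGSWE_eq1_velocity_correction}, which is admissible because $\widehat{B}_{\Delta x}\in{\bf V}^k_{\Delta x}$. Take $\widehat{v}_{\Delta x}=\widehat{u}_{\Delta x}$ in \eqref{DG-SGSWE_eq2_velocity_correction}, and add the two equations.

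\emph{Time derivatives.} Since $B$ is independent of time, the first equation produces $\frac{d}{dt}\int(\frac g2\|\widehat h\|^2+g\widehat h^\top\widehat B)$. For the second, we need $\int\widehat{q}_t^\top\widehat{u}-\frac12\int\widehat{u}^\top\mathcal{P}(\widehat{h}_t)\widehat{u}=\frac{d}{dt}\int\frac12\widehat{q}^\top\widehat{u}$. To obtain this:
\begin{itemize}
\item differentiate \eqref{DG-SGSWE_eq3_velocity_correction} in time and test with $\widehat{w}=\widehat{u}$, which gives $\int\widehat{u}^\top(\mathcal{P}(\widehat h_t)\widehat u+\mathcal P(\widehat h)\widehat u_t)=\int\widehat u^\top\widehat q_t$;
\item test the undifferentiated \eqref{DG-SGSWE_eq3_velocity_correction} with $\widehat w=\widehat u_t$, which gives $\int\widehat q^\top\widehat u_t=\int\widehat u^\top\mathcal P(\widehat h)\widehat u_t$, using the symmetry of $\mathcal P(\widehat h)$.
\end{itemize}
Combining these yields $\frac{d}{dt}\frac12\int\widehat q^\top\widehat u=\int\widehat u^\top\widehat q_t-\frac12\int\widehat u^\top\mathcal P(\widehat h_t)\widehat u$, so the time terms combine to $\frac{d}{dt}E_{\Delta x}$. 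The identity \eqref{commutative_prop} handles the reordering of $\mathcal P$ products.

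\emph{Spatial terms.} The spatial terms must reduce to a sum of pure interface contributions:
\begin{itemize}
\item The volume term $-\int\widehat q^\top g(\widehat h+\widehat B)_x$ from the first equation cancels against the hydrostatic volume term $\int g\,\widehat u^\top\mathcal P((\widehat h+\widehat B)_x)\widehat h=\int g\,(\widehat h+\widehat B)_x^\top\mathcal P(\widehat h)\widehat u$. This cancellation uses \eqref{DG-SGSWE_eq3_velocity_correction} with test function $(\widehat h+\widehat B)_x\in{\bf V}^k_{\Delta x}$ to replace $\mathcal P(\widehat h)\widehat u$ by $\widehat q$. This is exactly where the velocity projection is needed.
\item The convective volume terms $-\int\widehat u_x^\top\mathcal P(\widehat q)\widehat u+\frac12\int(\widehat u^\top\mathcal P(\widehat q)\widehat u)_x$ leave, after expanding the derivative, $\frac12\int(\widehat u^\top\mathcal P(\widehat q_x)\widehat u)$ together with a total derivative. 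I expect this to combine with the remaining projection terms into a boundary quantity $\frac12\widehat u^\top\mathcal P(\widehat q)\widehat u$ evaluated at cell edges, possibly after again using \eqref{DG-SGSWE_eq3_velocity_correction} to trade $\widehat q$ for $\mathcal P(\widehat h)\widehat u$ at the interfaces.
\end{itemize}

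\textbf{Interface algebra.} Collecting everything at each interface, I would use the product rules $\llbracket ab\rrbracket=\{a\}\llbracket b\rrbracket+\llbracket a\rrbracket\{b\}$ and $\llbracket a^\top\mathcal P(c)a\rrbracket=2\{a\}^\top\mathcal P(\{c\})\llbracket a\rrbracket+\ldots$, together with \eqref{commutative_prop}. The goal is to show that the total interface contribution equals
\[
-\widehat f_q^\top\llbracket g(\widehat h+\widehat B)\rrbracket-\widehat f_{qu}^\top\llbracket\widehat u\rrbracket+\frac12\widehat u\text{-terms}+\cdots,
\]
and that this vanishes identically for the central fluxes \eqref{EC_flux}, because $\widehat f_{qu}=\mathcal P(\widehat f_q)\{\widehat u\}$ pairs exactly with the term $\frac12\mathcal P(\widehat f_q)\llbracket\widehat u\widehat u\rrbracket$. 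For the Lax--Friedrichs fluxes \eqref{ES_flux}, the additional contributions are
\[
-\tfrac{\alpha_1}2\Bigl(g\|\llbracket\widehat h+\widehat B\rrbracket\|^2+\llbracket\widehat u\rrbracket^\top\llbracket\mathcal P(\widehat h+\widehat B)\widehat u\rrbracket\Bigr),
\]
and I must show this expression is $\le0$, possibly after cancellation against the hydrostatic interface term that involves $\{\widehat u\widehat h\}$.

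\textbf{Main obstacle.} The hardest step is this interface algebra. The point is to verify that the stochastic coupling through $\mathcal P$ still permits exact cancellation, since $\mathcal P$ products do not commute as matrices but only through \eqref{commutative_prop}. I would first do the deterministic case ($K=1$) to fix the bookkeeping, then replace each scalar product by the corresponding $\mathcal P$ expression. For the sign of the dissipation, I would rely on the positive semidefiniteness of $\mathcal P$ of a positive-definite state to control the $\llbracket\widehat u\rrbracket$ term.
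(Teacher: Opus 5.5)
Your overall plan is the paper's. You test with $g(\widehat h_{\Delta x}+\widehat B_{\Delta x})$ and $\widehat u_{\Delta x}$, recover $\frac{d}{dt}E_{\Delta x}$ from the time terms by two uses of \eqref{DG-SGSWE_eq3_velocity_correction}, and cancel the hydrostatic volume terms through the projection with test function $(\widehat h_{\Delta x}+\widehat B_{\Delta x})_x$. The well-balanced argument is also fine, and deriving $\widehat u_{\Delta x}\equiv 0$ from $\widehat q_{\Delta x}\equiv0$ is a small improvement, since the paper simply posits $\widehat u_{\Delta x}\equiv 0$.

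\textbf{First gap: the convective volume terms.} You read $\frac12\int(\mathcal P(\widehat q_{\Delta x})(\widehat v_{\Delta x,j}\widehat u_{\Delta x})_x)_j$ as $\frac12\int(\widehat u^\top\mathcal P(\widehat q)\widehat u)_x$, so that the derivative also falls on $\widehat q$. In the scheme this term is the integrated-by-parts form of $-\frac12\mathcal P(\widehat q_x)\widehat u$, whose interface partner is $\frac12\mathcal P(\widehat f_q)\llbracket\widehat v\widehat u\rrbracket$, and the derivative acts only on $\widehat v_j\widehat u$. With $\widehat v=\widehat u$ and the symmetry of $\mathcal P(\widehat q)$, $\sum_j(\mathcal P(\widehat q)(\widehat u_j\widehat u)_x)_j=2\,\mathcal P(\widehat q)\widehat u\cdot\widehat u_x$. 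Hence all convective volume terms cancel exactly and nothing remains. Under your reading a volume term $\frac12\int\widehat u^\top\mathcal P(\widehat q_x)\widehat u$ survives. It is not a boundary quantity, and \eqref{DG-SGSWE_eq3_velocity_correction} cannot convert it into one, because that equation is an $L^2$-projection identity and says nothing about traces. The combination you expect does not exist.

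\textbf{Second gap: the dissipation mechanism for \eqref{ES_flux}.} Write $\widehat\eta=\widehat h_{\Delta x}+\widehat B_{\Delta x}$. As contributions to $\frac{d}{dt}E_{\Delta x}$, the interface terms are as follows.
\begin{itemize}
\item The convective ones reduce by \eqref{eq_lm2} to $(\widehat f_{qu}-\mathcal P(\widehat f_q)\{\widehat u\})\cdot\llbracket\widehat u\rrbracket$. Since $\widehat f_q$ is single-valued there, no product rule for $\llbracket a^\top\mathcal P(c)a\rrbracket$ is needed.
\item The hydrostatic interface term equals $-g\{\mathcal P(\widehat h)\widehat u\}\cdot\llbracket\widehat\eta\rrbracket$ by \eqref{commutative_prop}. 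It is independent of $\alpha_1$ and is fully used up cancelling the central part of $g\widehat f_q\cdot\llbracket\widehat\eta\rrbracket$, so it cannot absorb anything else.
\end{itemize}
Your list of $\alpha_1$-contributions misses $+\frac{\alpha_1}{2}\mathcal P(\llbracket\widehat\eta\rrbracket)\{\widehat u\}\cdot\llbracket\widehat u\rrbracket$. This term comes from the diffusion part of $\widehat f_q$ entering the skew-symmetric term $\mathcal P(\widehat f_q)\{\widehat u\}$. Without it, your $-\frac{\alpha_1}{2}\llbracket\widehat u\rrbracket\cdot\llbracket\mathcal P(\widehat\eta)\widehat u\rrbracket$ is indefinite. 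With it, the product rule $\llbracket\mathcal P(\widehat\eta)\widehat u\rrbracket=\mathcal P(\{\widehat\eta\})\llbracket\widehat u\rrbracket+\mathcal P(\llbracket\widehat\eta\rrbracket)\{\widehat u\}$ leaves exactly $-\frac{\alpha_1}{2}\mathcal P(\{\widehat\eta\})\llbracket\widehat u\rrbracket\cdot\llbracket\widehat u\rrbracket$. This cancellation is precisely why $\widehat f_{qu}$ carries $\llbracket\mathcal P(\widehat h+\widehat B)\widehat u\rrbracket$.

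\textbf{Missing assumption.} The sign of the remaining term requires $\mathcal P(\{\widehat\eta\})$ to be positive semidefinite. Positive definiteness of $\mathcal P(\{\widehat h_{\Delta x}\})$ alone does not give this, because the bottom may be negative. The paper fixes the vertical datum so that $B_{\Delta x}\ge 0$. This is a genuine assumption, not a normalization: shifting $B$ by a constant $c$ changes the dissipation by $-\frac{\alpha_1}{2}c\,\|\llbracket\widehat u\rrbracket\|^2$.
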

Before proving the entropy stability, we first introduce two lemmas:
\begin{lemma}[Commutative dot product]
    For any symmetric matrix $A \in \mathbb{R}^{k\times k}$ and vectors $\vec{b}, \vec{c} \in \mathbb{R}^k, \forall k \in \mathbb{N}$, we have 
    \begin{equation}\label{eq_lm1}
        A\vec{b} \cdot \vec{c} = A \vec{c} \cdot \vec{b}.
    \end{equation}
\end{lemma}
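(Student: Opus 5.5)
The identity follows directly from writing the dot product as a matrix product and using the symmetry $A^\top = A$. The plan is to express $A\vec{b}\cdot\vec{c}$ as the scalar $\vec{c}^\top A\vec{b}$. A scalar equals its own transpose, so
\begin{equation*}
A\vec{b}\cdot\vec{c} = \vec{c}^\top A \vec{b} = \left(\vec{c}^\top A \vec{b}\right)^\top = \vec{b}^\top A^\top \vec{c} = \vec{b}^\top A \vec{c} = A\vec{c}\cdot\vec{b},
\end{equation*}
which is exactly \eqref{eq_lm1}.

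For a componentwise check, one can equivalently write $A\vec{b}\cdot\vec{c} = \sum_{l,m} A_{l,m} b_m c_l$. Relabeling $l \leftrightarrow m$ and using $A_{l,m}=A_{m,l}$ turns this into $\sum_{l,m} A_{l,m} c_m b_l = A\vec{c}\cdot\vec{b}$.

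There is no genuine obstacle here; the only hypothesis used is the symmetry of $A$. The lemma will later be applied with $A = \mathcal{P}(\widehat{z})$ for various coefficient vectors $\widehat{z}$. These matrices are symmetric because each $\mathcal{M}_k$ is symmetric, since $(\mathcal{M}_k)_{l,m} = \langle \phi_k, \phi_l\phi_m\rangle_\rho$ is invariant under swapping $l$ and $m$. This application will be used in the proof of Theorem~\ref{EC/ES_thm} to move $\mathcal{P}(\cdot)$ between the two factors of the inner products that appear when the test functions are chosen as $g(\widehat{h}_{\Delta x}+\widehat{B}_{\Delta x})$ and $\widehat{u}_{\Delta x}$.
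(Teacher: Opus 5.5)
Your argument is correct: writing $A\vec{b}\cdot\vec{c}=\vec{c}^\top A\vec{b}$, transposing the scalar (or equivalently relabeling indices), and using $A^\top=A$ proves the identity. Your remark that each $\mathcal{M}_k$, and hence each $\mathcal{P}(\widehat{z})$, is symmetric correctly justifies how the lemma is applied later; the paper states the lemma without proof, treating it as this same elementary consequence of symmetry, so your write-up just supplies the omitted one-line argument.
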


\begin{lemma}[A property of $\mathcal{P}$]
    For any $\widehat{a}, \widehat{b}$, we have the equality
    \begin{equation}\label{eq_lm2}
        \frac{1}{2}\Big( \mathcal{P}(\widehat{a})\widehat{b}^+ \cdot \widehat{b}^+ -  \mathcal{P}(\widehat{a})\widehat{b}^- \cdot \widehat{b}^-\Big) = \mathcal{P}(\widehat{a})\{ \widehat{b}\} \cdot \llbracket \widehat{b} \rrbracket.
    \end{equation}
\end{lemma}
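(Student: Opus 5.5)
We prove the identity by expanding the right-hand side with the definitions of the average and jump in \eqref{average_jump}, and then using the symmetry of $\mathcal{P}(\widehat{a})$ together with the commutative dot product lemma \eqref{eq_lm1}. The matrix $\mathcal{P}(\widehat{a})$ is symmetric, because each $\mathcal{M}_k$ is symmetric: $(\mathcal{M}_k)_{l,m} = \langle \phi_k, \phi_l\phi_m\rangle_\rho$ is symmetric in $(l,m)$. This is the only structural input needed.

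We assume $\widehat{a}$ is the same matrix argument on both sides of the interface, which is how the identity is stated. We write $A = \mathcal{P}(\widehat{a})$. Expanding,
\begin{equation*}
A\{\widehat{b}\}\cdot\llbracket\widehat{b}\rrbracket = \frac{1}{2}\big(A\widehat{b}^+ + A\widehat{b}^-\big)\cdot\big(\widehat{b}^+ - \widehat{b}^-\big)
= \frac{1}{2}\Big(A\widehat{b}^+\cdot\widehat{b}^+ - A\widehat{b}^+\cdot\widehat{b}^- + A\widehat{b}^-\cdot\widehat{b}^+ - A\widehat{b}^-\cdot\widehat{b}^-\Big).
\end{equation*}
By \eqref{eq_lm1} applied to the symmetric matrix $A$, we have $A\widehat{b}^+\cdot\widehat{b}^- = A\widehat{b}^-\cdot\widehat{b}^+$. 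The two cross terms therefore cancel, and what remains is exactly the left-hand side of \eqref{eq_lm2}.

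No real obstacle is expected here. The only point to check is that symmetry of $A$ is what cancels the cross terms: this is the bilinear analogue of the scalar identity $\frac12(b_+^2-b_-^2)=\{b\}\llbracket b\rrbracket$. In the later entropy analysis, where the argument of $\mathcal{P}$ itself jumps across the interface, one would instead apply the identity with a fixed matrix, such as an averaged one, and handle the remaining terms separately.
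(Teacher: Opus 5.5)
Your argument is correct. It is exactly the direct expansion the paper relies on: the paper states this lemma without a written proof, and it follows from the definitions of average and jump together with the symmetry of $\mathcal{P}(\widehat{a})$ (via the commutative dot product lemma), which cancels the cross terms. Your closing caveat is not needed here, because the paper applies the lemma only in $I_1$ with $\widehat{a}=\widehat{f}_q$, which is single-valued at each interface.
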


\begin{proof}[Proof of Theorem \ref{EC/ES_thm}]
    It begins with constructing the entropy equation by taking $\widehat{e}_{\Delta x,j} = g(\widehat{h}_{\Delta x, j}+\widehat{B}_{\Delta x,j}), \forall j$ in \eqref{DG-SGSWE_eq1_velocity_correction} and taking $\widehat{v}_{\Delta x,j} = \widehat{u}_{\Delta x,j}, \forall j$ in \eqref{DG-SGSWE_eq2_velocity_correction}, and adding the equations \eqref{DG-SGSWE_eq1_velocity_correction} and \eqref{DG-SGSWE_eq2_velocity_correction} together, then we get the following equation

    \begin{equation}\label{entropy_eq_DGSGSWE}
        \begin{aligned}
            &\sum_{\mathcal{I}_i \subset \Omega}  \sum_{j=1}^{K}  \int_{\mathcal{I}_i} (\widehat{h}_{\Delta x, j})_t g (\widehat{h}_{\Delta x, j}+ \widehat{B}_{\Delta x,j})dx 
+  \sum_{\mathcal{I}_i \subset \Omega}  \sum_{j=1}^{K} 
\int_{\mathcal{I}_i} (\widehat{q}_{\Delta x, j})_t \widehat{u}_{\Delta x,j} dx \\ 
 & -  \sum_{\mathcal{I}_i \subset \Omega}  \sum_{j=1}^{K} \int_{\mathcal{I}_i} \Big( \frac{1}{2}\mathcal{P}(\widehat{h}_{\Delta x})_t \widehat{u}_{\Delta x} \Big)_j \widehat{u}_{\Delta x,j}dx\\
  \coloneqq &  I_1 + I_2,
        \end{aligned}
    \end{equation}
where we combine all time-derivative terms in the left-hand side (LHS) of \eqref{entropy_eq_DGSGSWE}, and in the right-hand side (RHS) of \eqref{entropy_eq_DGSGSWE}, we denote spatial terms without gravitational terms by $I_1$  and those with gravitational terms by $I_2$ as follows,
\begin{equation}
    \begin{aligned}
        I_1 \coloneqq & \sum_{\mathcal{I}_i \subset \Omega} \sum_{j=1}^{K} \int_{\mathcal{I}_i} \Big(\mathcal{P}(\widehat{q}_{\Delta x})\widehat{u}_{\Delta x}\Big)_j (\widehat{u}_{\Delta x,j})_xdx - \sum_{\mathcal{I}_i \subset \Omega} \sum_{j=1}^{K} \int_{\mathcal{I}_i} \frac{1}{2} \Big( \mathcal{P}(\widehat{q}_{\Delta x})(\widehat{u}_{\Delta x,j}\widehat{u}_{\Delta x})_x \Big)_j dx \\
        & + \Big( \sum_{\mathcal{E}_i} \sum_{j=1}^{K} (\widehat{f}_{qu})_j  \llbracket \widehat{u}_{\Delta x,j}\rrbracket - \sum_{\mathcal{E}_i} \sum_{j=1}^{K} \frac{1}{2} \big( \mathcal{P}(\widehat{f}_q) \llbracket \widehat{u}_{\Delta x,j}\widehat{u}_{\Delta x} \rrbracket \big)_j  \Big),\\
    \end{aligned}
\end{equation}
and,
\begin{equation}
    \begin{aligned}
         I_2 \coloneqq & \sum_{\mathcal{I}_i \subset \Omega} \sum_{j=1}^{K} \int_{\mathcal{I}_i} \widehat{q}_{\Delta x, j} \big( g(\widehat{h}_{\Delta x, j}+\widehat{B}_{\Delta x,j}) \big)_x dx 
        - \sum_{\mathcal{I}_i \subset \Omega} \sum_{j=1}^{K} \int_{\mathcal{I}_i} \Big( g\mathcal{P}(\widehat{h}_{\Delta x}+\widehat{B}_{\Delta x})_x \widehat{u}_{\Delta x,j}\widehat{h}_{\Delta x} \Big)_j dx \\
        & - \sum_{\mathcal{E}_i} \sum_{j=1}^{K} \Big( g\mathcal{P}(\llbracket \widehat{h}_{\Delta x} + \widehat{B}_{\Delta x} \rrbracket) \{ \widehat{u}_{\Delta x,j} \widehat{h}_{\Delta x} \} \Big)_j
        + \sum_{\mathcal{E}_i} \sum_{j=1}^{K} (\widehat{f}_q)_j \llbracket g(\widehat{h}_{\Delta x, j}+\widehat{B}_{\Delta x,j}) \rrbracket. \\
    \end{aligned}
\end{equation}
    The LHS of \eqref{entropy_eq_DGSGSWE} is evaluated to obtain the derivative of semi-discrete entropy in \eqref{entropy_DGSGSWE_velocity_correction}:
\begin{equation}
\begin{aligned}
&\sum_{\mathcal{I}_i \subset \Omega}  \sum_{j=1}^{K}  \int_{\mathcal{I}_i} (\widehat{h}_{\Delta x, j})_t g (\widehat{h}_{\Delta x, j}+ \widehat{B}_{\Delta x,j})dx 
+  \sum_{\mathcal{I}_i \subset \Omega}  \sum_{j=1}^{K} 
\int_{\mathcal{I}_i} (\widehat{q}_{\Delta x, j})_t \widehat{u}_{\Delta x,j} dx  \\
&\quad -  \sum_{\mathcal{I}_i \subset \Omega}  \sum_{j=1}^{K} \int_{\mathcal{I}_i} \Big( \frac{1}{2}\mathcal{P}(\widehat{h}_{\Delta x})_t \widehat{u}_{\Delta x} \Big)_j \widehat{u}_{\Delta x,j}dx\\
\overset{\sum_{j=1}^K}{=} & \sum_{\mathcal{I}_i \subset \Omega} \int_{\mathcal{I}_i} g(\widehat{h}_{\Delta x})_t \cdot \widehat{h}_{\Delta x} dx 
+ \sum_{\mathcal{I}_i \subset \Omega} \int_{\mathcal{I}_i} g(\widehat{h}_{\Delta x})_t \cdot \widehat{B}_{\Delta x} dx 
+ \sum_{\mathcal{I}_i \subset \Omega} \int_{\mathcal{I}_i} (\widehat{q}_{\Delta x})_t \cdot \widehat{u}_{\Delta x} dx\\
& - \sum_{\mathcal{I}_i \subset \Omega} \int_{\mathcal{I}_i} \frac{1}{2}\mathcal{P}(\widehat{h}_{\Delta x})_t\widehat{u}_{\Delta x} \cdot \widehat{u}_{\Delta x}dx\\
\overset{\eqref{DG-SGSWE_eq3_velocity_correction}}{=} & \sum_{\mathcal{I}_i \subset \Omega} \int_{\mathcal{I}_i} g(\widehat{h}_{\Delta x})_t \cdot \widehat{h}_{\Delta x} dx 
+ \sum_{\mathcal{I}_i \subset \Omega} \int_{\mathcal{I}_i} g(\widehat{h}_{\Delta x})_t \cdot \widehat{B}_{\Delta x} dx 
+ \sum_{\mathcal{I}_i \subset \Omega} \int_{\mathcal{I}_i} \frac{1}{2}(\widehat{q}_{\Delta x})_t \cdot \widehat{u}_{\Delta x} dx\\
&+ \sum_{\mathcal{I}_i \subset \Omega}\int_{\mathcal{I}_i} \frac{1}{2}\mathcal{P}(\widehat{h}_{\Delta x})_t\widehat{u}_{\Delta x} \cdot \widehat{u}_{\Delta x} dx 
+ \sum_{\mathcal{I}_i \subset \Omega} \int_{\mathcal{I}_i} \frac{1}{2}\mathcal{P}(\widehat{h}_{\Delta x})(\widehat{u}_{\Delta x})_t\cdot \widehat{u}_{\Delta x}dx \\
&\quad - \sum_{\mathcal{I}_i \subset \Omega} \int_{\mathcal{I}_i} \frac{1}{2}\mathcal{P}(\widehat{h}_{\Delta x})_t\widehat{u}_{\Delta x} \cdot \widehat{u}_{\Delta x}dx\\
\overset{\eqref{DG-SGSWE_eq3_velocity_correction},\eqref{eq_lm1}}{=} & \sum_{\mathcal{I}_i \subset \Omega} \int_{\mathcal{I}_i} g(\widehat{h}_{\Delta x})_t \cdot \widehat{h}_{\Delta x} dx 
+ \sum_{\mathcal{I}_i \subset \Omega} \int_{\mathcal{I}_i} g(\widehat{h}_{\Delta x})_t \cdot \widehat{B}_{\Delta x} dx 
+ \sum_{\mathcal{I}_i \subset \Omega} \int_{\mathcal{I}_i} \frac{1}{2}(\widehat{q}_{\Delta x})_t \cdot \widehat{u}_{\Delta x} dx\\
& + \sum_{\mathcal{I}_i \subset \Omega} \int_{\mathcal{I}_i} \frac{1}{2}\widehat{q}_{\Delta x} \cdot (\widehat{u}_{\Delta x})_t dx\\
\overset{\eqref{entropy_DGSGSWE_velocity_correction}}{=} & \frac{d}{dt}E_{\Delta x}.
\end{aligned}
\end{equation}
The terms $I_1$ and $I_2$ can be simplified as follows.
\begin{equation}\label{I1_entropy_eq_DGSGSWE_volocity_correction}
    \begin{aligned}
        I_1 = & \sum_{\mathcal{I}_i \subset \Omega} \sum_{j=1}^{K} \int_{\mathcal{I}_i} \Big(\mathcal{P}(\widehat{q}_{\Delta x})\widehat{u}_{\Delta x}\Big)_j (\widehat{u}_{\Delta x,j})_xdx - \sum_{\mathcal{I}_i \subset \Omega} \sum_{j=1}^{K} \int_{\mathcal{I}_i} \frac{1}{2} \Big( \mathcal{P}(\widehat{q}_{\Delta x})(\widehat{u}_{\Delta x,j}\widehat{u}_{\Delta x})_x \Big)_j dx \\
        & + \Big( \sum_{\mathcal{E}_i} \sum_{j=1}^{K} (\widehat{f}_{qu})_j  \llbracket \widehat{u}_{\Delta x,j}\rrbracket - \sum_{\mathcal{E}_i} \sum_{j=1}^{K} \frac{1}{2} \big( \mathcal{P}(\widehat{f}_q) \llbracket \widehat{u}_{\Delta x,j}\widehat{u}_{\Delta x} \rrbracket \big)_j  \Big)\\
        \overset{\sum_{j=1}^K}{=} &  \sum_{\mathcal{I}_i \subset \Omega} \int_{\mathcal{I}_i} \mathcal{P}(\widehat{q}_{\Delta x})\widehat{u}_{\Delta x} \cdot (\widehat{u}_{\Delta x})_x -  \sum_{\mathcal{I}_i \subset \Omega} \int_{\mathcal{I}_i} \frac{1}{2}\mathcal{P}(\widehat{q}_{\Delta x})\widehat{u}_{\Delta x} \cdot (\widehat{u}_{\Delta x})_x -  \sum_{\mathcal{I}_i \subset \Omega} \int_{\mathcal{I}_i} \frac{1}{2}\mathcal{P}(\widehat{q}_{\Delta x})(\widehat{u}_{\Delta x})_x\cdot \widehat{u}_{\Delta x} dx\\
        & +  \sum_{\mathcal{E}_i} \Big(\widehat{f}_{qu} \cdot \llbracket \widehat{u}_{\Delta x} \rrbracket  - \big(\frac{1}{2}\mathcal{P}(\widehat{f}_{q})\widehat{u}_{\Delta x}^+  \cdot \widehat{u}_{\Delta x}^+ - \frac{1}{2}\mathcal{P}(\widehat{f}_{q})\widehat{u}_{\Delta x}^-  \cdot \widehat{u}_{\Delta x}^-  \big) \Big)\\
         \overset{\eqref{eq_lm2}}{=} &   \sum_{\mathcal{E}_i} \Big(\widehat{f}_{qu} \cdot \llbracket \widehat{u}_{\Delta x} \rrbracket  - \mathcal{P}(\widehat{f}_{q})\{ \widehat{u}_{\Delta x} \} \cdot \llbracket \widehat{u}_{\Delta x} \rrbracket  \Big).
    \end{aligned}
\end{equation}

\begin{equation}\label{I2_entropy_eq_DGSGSWE_velocity_correction}
    \begin{aligned}
        I_2 = & \sum_{\mathcal{I}_i \subset \Omega} \sum_{j=1}^{K} \int_{\mathcal{I}_i} \widehat{q}_{\Delta x, j} \big( g(\widehat{h}_{\Delta x, j}+\widehat{B}_{\Delta x,j}) \big)_x dx 
        - \sum_{\mathcal{I}_i \subset \Omega} \sum_{j=1}^{K} \int_{\mathcal{I}_i} \Big( g\mathcal{P}(\widehat{h}_{\Delta x}+\widehat{B}_{\Delta x})_x \widehat{u}_{\Delta x,j}\widehat{h}_{\Delta x} \Big)_j dx \\
        & - \sum_{\mathcal{E}_i} \sum_{j=1}^{K} \Big( g\mathcal{P}(\llbracket \widehat{h}_{\Delta x} + \widehat{B}_{\Delta x} \rrbracket) \{ \widehat{u}_{\Delta x,j} \widehat{h}_{\Delta x} \} \Big)_j
        + \sum_{\mathcal{E}_i} \sum_{j=1}^{K} (\widehat{f}_q)_j \llbracket g(\widehat{h}_{\Delta x, j}+\widehat{B}_{\Delta x,j}) \rrbracket \\
        \overset{\eqref{average_jump}}{=} & \sum_{\mathcal{I}_i \subset \Omega}  \int_{\mathcal{I}_i} g\widehat{q}_{\Delta x} \cdot (\widehat{h}_{\Delta x}+\widehat{B}_{\Delta x})_x dx 
        - \sum_{\mathcal{I}_i \subset \Omega} \int_{\mathcal{I}_i} g\mathcal{P}\big((\widehat{h}_{\Delta x}+\widehat{B}_{\Delta x})_x\big)\widehat{h}_{\Delta x} \cdot \widehat{u}_{\Delta x} dx\\
        & - \sum_{\mathcal{E}_i} \frac{1}{2} g\mathcal{P}(\llbracket \widehat{h}_{\Delta x} + \widehat{B}_{\Delta x}  \rrbracket) \widehat{h}_{\Delta x}^+\cdot \widehat{u}_{\Delta x}^+
        - \sum_{\mathcal{E}_i} \frac{1}{2} g\mathcal{P}(\llbracket \widehat{h}_{\Delta x} + \widehat{B}_{\Delta x}  \rrbracket) \widehat{h}_{\Delta x}^-\cdot \widehat{u}_{\Delta x}^-\\
        & + \sum_{\mathcal{E}_i} g \widehat{f}_q \cdot \llbracket \widehat{h}_{\Delta x} + \widehat{B}_{\Delta x} \rrbracket \\
        \overset{\eqref{eq_lm1},\eqref{eq_lm2}, \eqref{commutative_prop}}{=} & \sum_{\mathcal{I}_i \subset \Omega}  \int_{\mathcal{I}_i} g\widehat{q}_{\Delta x} \cdot (\widehat{h}_{\Delta x}+\widehat{B}_{\Delta x})_x dx 
        - \sum_{\mathcal{I}_i \subset \Omega} \int_{\mathcal{I}_i} g\mathcal{P}\big( \widehat{h}_{\Delta x} \big) \widehat{u}_{\Delta x} \cdot (\widehat{h}_{\Delta x}+\widehat{B}_{\Delta x})_x dx\\
        & - \sum_{\mathcal{E}_i} \{g\mathcal{P}(\widehat{h}_{\Delta x})\widehat{u}_{\Delta x}\} \cdot \llbracket \widehat{h}_{\Delta x}+ \widehat{B}_{\Delta x} \rrbracket 
        +  \sum_{\mathcal{E}_i} g \widehat{f}_q \cdot \llbracket \widehat{h}_{\Delta x} + \widehat{B}_{\Delta x} \rrbracket \\
        \overset{\eqref{DG-SGSWE_eq3_velocity_correction}}{=} & \sum_{\mathcal{E}_i} g\Big( \widehat{f}_q - \{\mathcal{P}(\widehat{h}_{\Delta x}) \widehat{u}_{\Delta x}\}  \Big)\cdot \llbracket \widehat{h}_{\Delta x} + \widehat{B}_{\Delta x} \rrbracket.
    \end{aligned}
\end{equation}
To make the proposed scheme entropy conservative or entropy stable \eqref{def_eq_ECES}, it suffices to select appropriate numerical fluxes $\widehat{f}_q, \widehat{f}_{qu}$ to let $I_1$ and $I_2$ both vanish or non-positive. If we take 
\begin{equation}\label{numericalflux_ES_1_velocity_correction}
    \widehat{f}_q  = \{\mathcal{P}(\widehat{h}_{\Delta x}) \widehat{u}_{\Delta x} \} - \frac{1}{2}\alpha_1 \llbracket \widehat{h}_{\Delta x} + \widehat{B}_{\Delta x} \rrbracket, 
\end{equation}
for some $\alpha_1 \ge 0$, $I_2$ becomes a diffusive term
\begin{equation}
    I_2 = -\frac{g}{2} \alpha_1 \sum_{\mathcal{E}_i} \Big\| \llbracket\widehat{h}_{\Delta x} + \widehat{B}_{\Delta x} \rrbracket \Big\|^2_{l^2} \le 0.
\end{equation}
In addition, by substituting \eqref{numericalflux_ES_1_velocity_correction} into \eqref{I1_entropy_eq_DGSGSWE_volocity_correction}, we have
\begin{equation}\label{I1_evaluation}
\begin{aligned}
     I_1 = & \sum_{\mathcal{E}_i} \Big( \big(\widehat{f}_{qu}   - \mathcal{P}(\widehat{f}_{q})\{ \widehat{u}_{\Delta x} \} \big) \cdot \llbracket \widehat{u}_{\Delta x} \rrbracket  \Big)\\
     = & \sum_{\mathcal{E}_i} \Big( \big(\widehat{f}_{qu}   - \mathcal{P}(\{ \mathcal{P}(\widehat{h}_{\Delta x}) \widehat{u}_{\Delta x}\})\{ \widehat{u}_{\Delta x} \} + \frac{1}{2}\alpha_1\mathcal{P}(\llbracket \widehat{h}_{\Delta x} + \widehat{B}_{\Delta x} \rrbracket) \{ \widehat{u}_{\Delta x} \} \big) \cdot \llbracket \widehat{u}_{\Delta x} \rrbracket  \Big).
\end{aligned}
\end{equation}
To balance the contribution of $\widehat{f}_q$ in $I_1$, we can further take
\begin{equation}\label{numericalflux_ES_2_velocity_correction}
    \widehat{f}_{qu}  = \mathcal{P}(\{ \mathcal{P}(\widehat{h}_{\Delta x}) \widehat{u}_{\Delta x}\})\{ \widehat{u}_{\Delta x}\} - \frac{1}{2}\alpha_1 \llbracket \mathcal{P}(\widehat{h}_{\Delta x} + \widehat{B}_{\Delta x})\widehat{u}_{\Delta x} \rrbracket, 
\end{equation}
and substitute it into \eqref{I1_evaluation} and directly expand the jump and average quantities by definition \eqref{average_jump}. Note that since only the spatial derivative of the bottom is involved in the continuous system and the DG-SG discretization, the vertical datum is chosen so that the discrete bottom representation satisfies
\begin{align}
    B_{\Delta x}(x,\xi)\ge 0,
\end{align}
for all $x\in\Omega$ and for $\rho$-a.e. $\xi$. Under this assumption, $P(\{\widehat h_{\Delta x}+\widehat B_{\Delta x}\})$ is symmetric positive definite, since the hyperbolicity-preserving condition gives $P(\{\widehat h_{\Delta x}\})> 0$, while $B_{\Delta x}\ge 0$ implies $P(\{\widehat B_{\Delta x}\})\ge 0$.  Hence, $I_1$ also becomes dissipative:
\begin{equation}
\begin{aligned}
     I_1 = &-\frac{1}{2}\alpha_1 \sum_{\mathcal{E}_i} \mathcal{P}(\{ \widehat{h}_{\Delta x} + \widehat{B}_{\Delta x} \})\llbracket \widehat{u}_{\Delta x} \rrbracket \cdot \llbracket \widehat{u}_{\Delta x} \rrbracket, \\
     \le & -\frac{1}{2}\alpha_1 \sum_{\mathcal{E}_i} \lambda_{1} \Big\| \llbracket \widehat{u}_{\Delta x} \rrbracket \Big\|^2_{l^2},
\end{aligned}
\end{equation}
where $\lambda_1$ is the smallest eigenvalue of the symmetric positive definite matrix $\mathcal{P}(\{ \widehat{h}_{\Delta x} + \widehat{B}_{\Delta x}\})$. Therefore, $\frac{d}{dt}E_{\Delta x} = I_1+I_2 \le 0$, which shows the entropy stability property. Note that when $\alpha \equiv 0$ in $I_1$ and $I_2$, the DG-SG scheme will reduce to an entropy conservative numerical scheme.

Finally, to establish the well-balanced property, it suffices to show that, for the initial data
\begin{align}\label{wb_initial}
    \widehat{u}_{\Delta x} &\equiv \mathbf{0}, 
    & \widehat{h}_{\Delta x}+\widehat{B}_{\Delta x} &= \text{const vector},
    & \forall x &\in \Omega,\quad t=0,
\end{align}
the time derivatives of all PCE coefficients vanish, i.e.,
\begin{align}
    (\widehat{h}_{\Delta x, j})_t = 0,
\qquad
(\widehat{q}_{\Delta x, j})_t = 0,
\qquad \forall j=1,\ldots,K.
\end{align}

Under the condition \eqref{wb_initial}, all spatial volume and numerical flux terms in the DG-SG scheme vanish. In particular, combining \eqref{DG-SGSWE_eq1_velocity_correction} with \eqref{wb_initial} yields $(\widehat{h}_{\Delta x, j})_t = 0,
\forall j=1,\ldots,K.$ Substituting this result into \eqref{DG-SGSWE_eq2_velocity_correction} and again using \eqref{wb_initial}, we further obtain $(\widehat{q}_{\Delta x, j})_t = 0,
\forall j=1,\ldots,K.$ Therefore, the semi-discrete solution preserves the discrete steady state, which proves the well-balanced property \eqref{well-balance-vector}.

\end{proof}
\begin{remark}[Adaptive diffusion]
We may introduce different diffusion parameters in the two numerical fluxes by incorporating an adaptive diffusion quantity in $\widehat{f}_{qu}$. Suppose that we take the first numerical flux in \eqref{numericalflux_ES_1_velocity_correction} and consider $\widehat{f}_{qu}$ adaptively as
\begin{equation}\label{numericalflux_ES_2_adap_velocity_correction}
    \widehat{f}_{qu}  = \mathcal{P}( \{\mathcal{P}(\widehat{h}_{\Delta x}) \widehat{u}_{\Delta x}  \})\{ \widehat{u}_{\Delta x}\}- \frac{1}{2}\alpha_1 \llbracket W_1 \rrbracket - \frac{1}{2}(\alpha_2 - \alpha_1)\llbracket W_2 \rrbracket,
\end{equation}
which implies 
\begin{equation}\label{RK_I1}
\begin{aligned}
      I_1 =& -\sum_{\mathcal{E}_i}\frac{1}{2}\alpha_1  \Big( \llbracket W_1 \rrbracket - \mathcal{P}(\llbracket \widehat{h}_{\Delta x} + \widehat{B}_{\Delta x} \rrbracket) \{\widehat{u}_{\Delta x} \} \Big) \cdot \llbracket \widehat{u}_{\Delta x} \rrbracket, \\
      & - \sum_{\mathcal{E}_i} (\frac{1}{2}\alpha_2 - \frac{1}{2}\alpha_1)\llbracket W_2 \rrbracket \cdot \llbracket \widehat{u}_{\Delta x} \rrbracket.
\end{aligned}
\end{equation}
To make the first term of $I_1$ in \eqref{RK_I1} perform as a diffusive term, we can take $W_1 = \mathcal{P}(\widehat{h}_{\Delta x}+\widehat{B}_{\Delta x})\widehat{u}_{\Delta x}$. For the second term, a straightforward way is to take $W_2 = \mathrm{sign}(\alpha_2 - \alpha_1)(\widehat{u}_{\Delta x})$ to make it a diffusive term. With these choices, we have an adaptive second numerical flux \eqref{numericalflux_ES_2_adap_velocity_correction}, resulting in
\begin{equation}
    \begin{aligned}
        I_1 = & -\frac{1}{2}\alpha_1 \sum_{\mathcal{E}_i} \mathcal{P}(\{ \widehat{h}_{\Delta x} + \widehat{B}_{\Delta x} \})\llbracket \widehat{u}_{\Delta x} \rrbracket \cdot \llbracket \widehat{u}_{\Delta x} \rrbracket - \frac{1}{2}|\alpha_2 - \alpha_1| \sum_{\mathcal{E}_i} \Big\| \llbracket \widehat{u}_{\Delta x} \rrbracket \Big\|^2_{l^2},\\
        \le &  -\sum_{\mathcal{E}_i} (\frac{1}{2}\alpha_1 \lambda_{1} + \frac{1}{2}|\alpha_2 - \alpha_1|) \Big\| \llbracket \widehat{u}_{\Delta x} \rrbracket \Big\|^2_{l^2}.
    \end{aligned}
\end{equation}    
\end{remark}
In the context of the shallow water equations, we use the terminologies entropy and energy interchangeably, as they represent the same convex functional associated with the system. Accordingly, we will refer to entropy conservation/stability and energy conservation/stability with the same meaning throughout this work.

\section{Algorithmic details and pseudocode}\label{Sec4}
In this section, we present the algorithm details of the corresponding fully discrete schemes. The complete algorithmic pseudocode is given in algorithm \ref{alg_DG-SG}. In the following, we describe several key components required to ensure robustness and stability in implementation.

\subsection{Desingularization}
To avoid the singularity in the velocity reconstruction when the water height approaches zero, we employ a desingularization procedure in one spatial dimension. Instead of directly computing the velocity as $u = q/h$, we define the discrete velocity through a regularized relation based on the PCE coefficients. 

More precisely, let $\widehat{h}_{\Delta x}$ and $\widehat{q}_{\Delta x}$ denote the vectors of the PCE coefficients of the water height and discharge, respectively. We define the DG velocity $\widehat{u}_{\Delta x}$ by \eqref{DG-SGSWE_eq3_velocity_correction}
\begin{equation}
\sum_{\mathcal{I}_i \subset \Omega} \int_{\mathcal{I}_i}
(\mathcal{P}(\widehat{h}_{\Delta x})\widehat{u}_{\Delta x} - \widehat{q}_{\Delta x})\widehat{w}_{\Delta x,j}\,dx
= 0,
\quad \forall \widehat{w}_{\Delta x,j},\; j=1,\dots,K.
\end{equation}
Hence, the computation of $\widehat{u}_{\Delta x}$ requires $\big(\mathcal{P}(\widehat{h}_{\Delta x})\big)^{-1}$, which could be ill-conditioned in some situations, for example, when the water height is too small in the near-dry region. To address this issue, we adopt a desingularization procedure introduced in \cite{kurganov2007second}, whose stochastic variant was introduced in \cite{dai2021hyperbolicity,dai2022hyperbolicity,dai2024energy,epshteyn2025energy}.

Let $\mathcal{P}(\widehat{h}) = Q^\top \Pi Q,$ be the eigenvalue decomposition of $\mathcal{P}(\widehat{h})$, where $\Pi = \mathrm{diag}(\lambda_1,\dots,\lambda_{K})$. For each eigenvalue $\lambda_i$, $i=1,\ldots,K$, and a given positive constant $\epsilon$, we define
\begin{align}
    \Pi^{cor} \coloneqq \mathrm{diag}(\lambda_1^{cor},\dots,\lambda_{K}^{cor}),& & 
\lambda_i^{cor} \coloneqq \frac{\sqrt{\lambda_i^4+\max(\lambda_i^4,\epsilon^4)}}{\sqrt{2}\lambda_i}.
\end{align}
In the numerical experiments, we take $\epsilon=\Delta x_{\mathcal{I}}$ on each element $\mathcal{I}$. We then define $\mathcal{P}(\widehat{h}_{\Delta x})^{cor} \coloneqq Q^\top \Pi^{cor} Q$ and substitute it into the velocity update formula in our DG-SG scheme:
\begin{equation}\label{vel_update}
     \sum_{\mathcal{I}_i \subset \Omega} \int_{\mathcal{I}_i} (\mathcal{P}(\widehat{h}_{\Delta x})^{cor}\widehat{u}_{\Delta x} - \widehat{q}_{\Delta x} ) \widehat{w}_{\Delta x,j}\,dx =  0, \quad \forall \widehat{w}_{\Delta x,j}, \ j=1,\dots,K,
\end{equation}
To maintain consistency of the scheme, the discharge $\widehat{q}_{\Delta x}$ needs to be subsequently recomputed by
\begin{equation}
    \sum_{\mathcal{I}_i \subset \Omega} \int_{\mathcal{I}_i} (\mathcal{P}(\widehat{h}_{\Delta x})\widehat{u}_{\Delta x} - \widehat{q}_{\Delta x} ) \widehat{w}_{\Delta x,j}\,dx =  0, \quad \forall \widehat{w}_{\Delta x,j}, \ j=1,\dots,K.
\end{equation}

\subsection{Hyperbolicity-preserving criterion}\label{sec_hpc}
The hyperbolicity of the skew-symmetric SGSWE \eqref{SGSWE_skew} requires that $\mathcal{P}(\widehat{h})$ be positive-definite, i.e., $\mathcal{P}(\widehat{h})>0$. In our DG-SG semi-discrete scheme, this corresponds to the condition $\mathcal{P}(\widehat{h}_{\Delta x})>0$ holding for each DG-quadrature point in each cell involved in the scheme. While this may be true at the current time, additional conditions must be imposed to ensure that it is valid at the next time step. A sufficient condition, \cite[Theorem 3.2]{dai2021hyperbolicity}, was introduced to make it applicable. Based on this, we define the hyperbolicity-preserving property for a fully discrete numerical scheme as follows.

\begin{definition}
    For the DG-SGSWE scheme, provided 
    \begin{equation}\label{pp_creterion_assumption}
        h_{\Delta x, s}(x_i,t^n,\xi_j) \coloneqq \sum_{k \le K} \widehat{h}_{\Delta x,k}(x_i,t^n)\phi_k(\xi_j) >0, \quad \forall i=1,...,N_q, j=1,...,M.
    \end{equation}
    where $\{x_i\}$ is the set of DG quadrature points involved in the scheme, $t^n$ is the current time step, $\{ \xi_j\}$ are stochastic nodes such that the $M-$point positivity quadrature rule is exact on $P_{\Lambda}^3$.
    Then the DG-SGSWE scheme is said to preserve positivity of the water height, if  \begin{equation}\label{pp_creterion_condition}
        h_{\Delta x, s}(x_i,t^{n+1},\xi_j)>0, \quad \forall i=1,...,N_q, j=1,...,M.
    \end{equation}
\end{definition}

Suppose that \eqref{pp_creterion_assumption} is satisfied and the forward Euler method is applied. To enforce \eqref{pp_creterion_condition} at the next time level, we use one of the following two strategies.
\begin{remark}[Mean-positivity-preserving and linear scaling]
Let $\Phi(\xi) \coloneqq \big( \phi_1(\xi), \phi_2(\xi),... \phi_K(\xi) \big)^\top$. Suppose at each element $\mathcal{I}$ with size $\Delta x_{\mathcal{I}}$, $\Delta t$ satisfies
    \begin{equation}
\Delta t^n < \Delta t_{\Delta x_{\mathcal{I}}}^n \coloneqq \min_{j=1,...,M} \Big( \Delta x_{\mathcal{I}} \Big| 
 \frac{(\overline{\widehat{h}}_{\Delta x}^{n})^\top\Phi(\xi_j)}{(\widehat{f_q}|_{\mathcal{I}^+}^n - \widehat{f_q}|_{\mathcal{I}^-}^n)^\top \Phi(\xi_j)} \Big| \Big),
 \end{equation}
 for sufficiently large $M$, s.t., the $M-$point quadrature rule that is exact on $P^3_{\Lambda}$ on the stochastic space. Then rescale the water height at each quadrature points in physical space ($S_\mathcal{I}$ denotes the set of involved points) involved in the DG-SG scheme at the element $\mathcal{I}$ as follows:
 \begin{equation}
 \tilde{h}_{\Delta x, s}(x,\xi_j)^n = \theta\big(h_{\Delta x, s}(x,\xi_j)^n - \overline{h}_{\Delta x, s}(\xi_j)^n \big) + \overline{h}_{\Delta x, s}(\xi_j), \quad \theta = \min\{1, \overline{h}_{\Delta x, s}(\xi_j)^n \oslash  (\overline{h}_{\Delta x, s}(\xi_j)^n - m_{\mathcal{I}}) \},
 \end{equation}
 where $m_{\mathcal{I}}= \min_{x\in S_{\mathcal{I}}} h_{\Delta x, s}^n(x,\xi_j)$, and $\oslash$ is the elementwise division. This linear scaling is extended from \cite{xing2010positivity} to fit our DG-SG formulation.
\end{remark}

\begin{remark}[Straightforward restriction]
    Compared to the first way, one can make a more straightforward restriction on $\Delta t$ as follows:
        \begin{equation}
\Delta t^n < \Delta t_{\Delta x_{\mathcal{I}}}^n \coloneqq \min_{i =1,...,|S_{\mathcal{I}}|,}\min_{j=1,...,M} \Big( \Big| 
\frac{\widehat{h}_{\Delta x}^{n}(x_i)^\top\Phi(\xi_j)}{\mathcal{L}(\widehat{h}_{\Delta x}^n)_i^\top \Phi(\xi_j)} \Big| \Big).
\end{equation}
 Here $|S_{\mathcal{I}}|$ denotes the size of the set $S_{\mathcal{I}}$, consisting of involved physical quadrature points in the DG-SG scheme, $\mathcal{L}(\widehat{h}_{\Delta x}^n)_i \coloneqq \mathcal{L}(\widehat{h}_{\Delta x}^n)|_{x_i}$ is defined by rewriting the DG-SG scheme with forward Euler time discretization on the variable $\widehat{h}_{\Delta x}$ in the form of $\widehat{h}_{\Delta x}^{n+1} = \widehat{h}_{\Delta x}^{n} +\Delta t^n \mathcal{L}(\widehat{h}_{\Delta x}^n).$  
\end{remark}

\subsection{The troubled-cell indicator and characteristic TVB limiter}\label{sec_slopelimiting}
In order to suppress spurious oscillations near solution discontinuities, a slope limiter procedure is needed. We follow the standard slope limiting procedure in RKDG methods \cite{cockburn2001runge,qiu2005comparison}, which consists of two steps:
\begin{itemize}
    \item First, we identify the troubled cells, i.e., the elements that need the limiting procedure.
    \item Second, we apply the corresponding slope limiter to the solutions in troubled cells
\end{itemize}

First, we extend the scaling modification of Fu-Shu troubled-cell indicator proposed in \cite{fu2022high} to a stochastic version to identify troubled cells. The troubled-cell indicator for a discontinuous function $p \in V_{\Delta x}^k$ is defined as follows:
\begin{equation}
    I_{\mathcal{I}}(p) = \frac{\sum_{T \in \omega (\mathcal{I})}| \overline{\overline{p}}_T - \overline{p}_{\mathcal{I}}|}{\overline{p}_{\max} - \overline{p}_{\min}},
\end{equation}
where $\omega(\mathcal{I})$ is the union of cells sharing a common edge with the element $\mathcal{I}$, and $p|_T$ is the discontinuous function defined on $T\in \omega(\mathcal{I})$, and $\overline{\overline{p}}_T$ is the cell-average of the polynomial $p|_T$ extended to the cell $\mathcal{I}$ from its neighboring cell $T$, and $\overline{p}_{\max}$ and $\overline{p}_{\min}$ are the global maximal and minimal cell average on the domain. For a given tolerance $tol>0$, the cell $\mathcal{I}$ is marked as a troubled cell if 
$I_{\mathcal{I}}(p)>tol$. In our DG-SG scheme, we take the stochastic mean of $h_{\Delta x,s}+B_{\Delta x}$ as the indicating function. 

Second, in those detected troubled cells, we apply the characteristic-wise TVB limiter \cite{cockburn2001runge,cockburn1998runge} on the equilibrium variables $\big(\widehat{h}_{\Delta x}^\top + \widehat{B}_{\Delta x}^\top, \widehat{q}_{\Delta x}^\top \big)^\top$ in our DG-SG scheme. The use of the water surface $\widehat h_{\Delta x}+\widehat B_{\Delta x}$, rather than $\widehat h_{\Delta x}$ alone, is compatible with the well-balanced structure and avoids unnecessary limiting near the lake-at-rest steady state.

\subsection{High order SSP-RK fully discretization with adaptive time step size}
The timestep restriction for the positivity-preserving criterion in Section \ref{sec_hpc} is derived under the assumption of a forward Euler time discretization. This restriction can be extended to higher-order strong stability-preserving Runge–Kutta (SSP-RK) methods, which may be written as convex combinations of forward Euler steps with multiple intermediate stages \cite{gottlieb2001strong}. We employ the adaptive time-stepping strategy proposed in \cite{chertock2015well-adaptiveRK}, which permits the application of forward Euler-based timestep restrictions within SSP-RK schemes. In practice, the timestep constraint is updated at each intermediate stage of the Runge–Kutta procedure.


\subsection{Augmented energy}\label{def_aug_energy}
Definition \ref{def_ECES} characterizes energy conservative and energy stable schemes based on properties satisfied in the \textit{interior} of the computational domain. However, this definition does not, in general, guarantee global energy conservation or stability in the presence of boundary contributions. In particular, boundary fluxes may introduce energy into the domain, leading to an increase in the total energy. Such behavior is not attributable to a deficiency of the numerical scheme, but rather reflects the underlying physical dynamics of the model. This phenomenon may arise even for standard implementations with outflow boundary conditions; see, e.g., \cite{epshteyn2025energy}.

With the same idea of removing the boundary effect when computing the entropy/energy in our DG-SG solution, we define the augmented energy $\widetilde{E}_{\Delta x}$ over the one-spatial computational domain divided into $N$ elements as
\begin{equation}
\begin{aligned}
   \frac{d}{dt}\widetilde{E}_{\Delta x}(t)
   =& \sum_{i=1}^N \frac{d}{dt}\widetilde{E}_{\Delta x,i}(t) \\
   =& \sum_{i=1}^N \frac{d}{dt}E_{\Delta x,i}(t)
   - \textbf{boundary terms} \\
   \overset{\eqref{I1_entropy_eq_DGSGSWE_volocity_correction},\eqref{I2_entropy_eq_DGSGSWE_velocity_correction}}{=}& \sum_{i=1}^N \frac{d}{dt}E_{\Delta x,i}(t)
   - \bigg\{
   (\widehat{f}_{qu}\cdot \widehat{u}_{\Delta x})\big|_{x_L}
   -(\widehat{f}_{qu}\cdot \widehat{u}_{\Delta x})\big|_{x_R}  
   +\frac{1}{2}
   \big(\mathcal{P}(\widehat{f}_{q})\widehat{u}_{\Delta x}
   \cdot \widehat{u}_{\Delta x}\big)\big|_{x_R}
   -\frac{1}{2}
   \big(\mathcal{P}(\widehat{f}_{q})\widehat{u}_{\Delta x}
   \cdot \widehat{u}_{\Delta x}\big)\big|_{x_L}
   \bigg\} \\
   &+
   \frac{1}{2}g
   \big(
   \mathcal{P}(\llbracket
   \widehat{h}_{\Delta x}+\widehat{B}_{\Delta x}
   \rrbracket)
   \widehat{h}_{\Delta x}
   \cdot \widehat{u}_{\Delta x}
   \big)\big|_{x_L} 
   +
   \frac{1}{2}g
   \big(
   \mathcal{P}(\llbracket
   \widehat{h}_{\Delta x}+\widehat{B}_{\Delta x}
   \rrbracket)
   \widehat{h}_{\Delta x}
   \cdot \widehat{u}_{\Delta x}
   \big)\big|_{x_R} \\
   &+
   \big(
   g\widehat{f}_q\cdot
   (\widehat{h}_{\Delta x}+\widehat{B}_{\Delta x})
   \big)\big|_{x_R}
   -
   \big(
   g\widehat{f}_q\cdot
   (\widehat{h}_{\Delta x}+\widehat{B}_{\Delta x})
   \big)\big|_{x_L},
   \\
   \widetilde{E}_{\Delta x}(0)
   =& \sum_{i=1}^N E_{\Delta x,i}(0).
\end{aligned}
\end{equation}
where $x_L, x_R$ represent the left boundary and the right boundary, respectively. This definition allows us to assess the intrinsic energy behavior of the numerical scheme independently of boundary effects.

\begin{algorithm}[htbp]
\caption{Pseudocode of the fully discrete DG-SG scheme for stochastic SWE using SSPRK3}\label{alg_DG-SG}
\textbf{Input:} Initial data: $h_0, q_0$; Bottom topography $B$; Terminal time $T$; Polynomial index set $\Lambda$ for stochastic Galerkin projection; A basis of DG solution space.\\
\textbf{Initialize (SG):} Compute PCE coefficients $\widehat{h}, \widehat{q}, \widehat{B}$ as functions of $x$.\\
\textbf{Initialize (DG):} Compute DG approximation solutions $\widehat{h}_{\Delta x}, \widehat{u}_{\Delta x}, \widehat{q}_{\Delta x}$ and bottom $\widehat{B}_{\Delta x}$.\\
\textbf{Initialize time:} Set current time $t=0$. 
\begin{algorithmic}
\While{$t<T$}
\State \textbf{Stage 1 of RK3} (Input: $\widehat{h}_{\Delta x}, \widehat{q}_{\Delta x}, \widehat{u}_{\Delta x}$)
\State Compute the $\Delta t$ using the positivity-preserving criterion in Section \ref{sec_hpc} and CFL condition.
\State Compute $\widehat{h}_{\Delta x}^{(1)}$ by \eqref{DG-SGSWE_eq1_velocity_correction} using forward Euler with $\Delta t$.
\State Compute $\widehat{q}_{\Delta x}^{(1)}$ for all by \eqref{DG-SGSWE_eq2_velocity_correction} using forward Euler with $\Delta t$.
\State Apply the slope limiting procedure in Section \ref{sec_slopelimiting} to $\big(\widehat{h}_{\Delta x}^{(1)} + \widehat{B}_{\Delta x}, \widehat{q}_{\Delta x}^{(1)} \big)$
\State Compute $\widehat{u}_{\Delta x}^{(1)}$ using \eqref{DG-SGSWE_eq3_velocity_correction}.

\State \textbf{Stage 2 of RK3} (Input: $\widehat{h}_{\Delta x}^{(1)}, \widehat{q}_{\Delta x}^{(1)}, \widehat{u}_{\Delta x}^{(1)}$)
\State Compute the $\Delta t_2$ using the positivity-preserving criterion in Section \ref{sec_hpc} and CFL condition.
\If{$\Delta t_2 \ge \Delta t$}
\State Compute $\widehat{h}_{\Delta x}^{(2*)}$ by \eqref{DG-SGSWE_eq1_velocity_correction} using forward Euler $\Delta t$.
\State Compute $\widehat{q}_{\Delta x}^{(2*)}$ for all by \eqref{DG-SGSWE_eq2_velocity_correction} using forward Euler $\Delta t$.
\State Update $\widehat{h}_{\Delta x}^{(2)}, \widehat{q}_{\Delta x}^{(2)}$ using the coefficients for SSPRK3 state 2:
$$\widehat{h}_{\Delta x}^{(2)} =  \frac{3}{4}\widehat{h}_{\Delta x}+ \frac{1}{4}\widehat{h}_{\Delta x}^{(2*)}, \quad \widehat{q}_{\Delta x}^{(2)} =  \frac{3}{4}\widehat{q}_{\Delta x}+ \frac{1}{4}\widehat{q}_{\Delta x}^{(2*)}$$
\State Apply the slope limiting procedure in Section \ref{sec_slopelimiting} to $\big(\widehat{h}_{\Delta x}^{(2)} + \widehat{B}_{\Delta x}, \widehat{q}_{\Delta x}^{(2)} \big)$
\State Compute $\widehat{u}_{\Delta x}^{(2)}$ using \eqref{DG-SGSWE_eq3_velocity_correction} with inputs $\widehat{h}_{\Delta x}^{(2)}, \widehat{q}_{\Delta x}^{(2)}$.
\ElsIf{$\Delta t_2 < \Delta t$}
\State Go back to Stage 1 of RK3 and set $\Delta t = \Delta t_2$
\EndIf
\State \textbf{Stage 3 of RK3} (Input: $\widehat{h}_{\Delta x}^{(2)}, \widehat{q}_{\Delta x}^{(2)}, \widehat{u}_{\Delta x}^{(2)}$)
\State Compute the $\Delta t_3$ using the positivity-preserving criterion in Section \ref{sec_hpc} and CFL condition.
\If{$\Delta t_3 \ge \Delta t$}
\State Compute $\widehat{h}_{\Delta x}^{(3*)}$ by \eqref{DG-SGSWE_eq1_velocity_correction} using forward Euler $\Delta t$.
\State Compute $\widehat{q}_{\Delta x}^{(3*)}$ for all by \eqref{DG-SGSWE_eq2_velocity_correction} using forward Euler $\Delta t$.
\State Update $\widehat{h}_{\Delta x}^{(3)}, \widehat{q}_{\Delta x}^{(3)}$ using the coefficients for SSPRK3 state 3:
$$\widehat{h}_{\Delta x}^{(3)} =  \frac{1}{3}\widehat{h}_{\Delta x}+ \frac{2}{3}\widehat{h}_{\Delta x}^{(3*)}, \quad \widehat{q}_{\Delta x}^{(3)} =  \frac{1}{3}\widehat{q}_{\Delta x}+ \frac{2}{3}\widehat{q}_{\Delta x}^{(3*)}$$
\State Apply the slope limiting procedure in Section \ref{sec_slopelimiting} to $\big(\widehat{h}_{\Delta x}^{(3)} + \widehat{B}_{\Delta x}, \widehat{q}_{\Delta x}^{(3)} \big)$
\ElsIf{$\Delta t_3 < \Delta t$}
\State Go back to Stage 1 of RK3 and set $\Delta t = \Delta t_3$
\EndIf
\State Denote $\widehat{h}_{\Delta x} = \widehat{h}_{\Delta x}^{(3)}, \widehat{q}_{\Delta x} = \widehat{q}_{\Delta x}^{(3)}$ and update $\widehat{u}_{\Delta x}$ using \eqref{DG-SGSWE_eq3_velocity_correction} with $\widehat{h}_{\Delta x}, \widehat{q}_{\Delta x}$.
\State $t \gets t+\Delta t$
\EndWhile
\end{algorithmic}
\end{algorithm}

\section{Numerical experiments}\label{Sec5}
In this section, we assess the performance of the proposed DG-SG schemes through a sequence of numerical experiments. The examples are designed to verify the main structural properties established in the previous sections, including high-order accuracy, well-balanced preservation of stochastic lake-at-rest states, entropy stability, robustness near dry regions, and applicability to multivariate random inputs. Unless specifically stated, the following numerical setup is used.

Let $\xi$ be a one-dimensional random variable supported on $[-1,1]$ with Beta distribution
\[
\rho(\xi) \propto (1-\xi)^{\alpha}(1+\xi)^{\beta}, \quad \alpha,\beta>-1.
\]
The associated orthonormal polynomial basis is given by the Jacobi polynomials with parameters $(\alpha,\beta)$. The special case $\alpha=\beta=0$ corresponds to the uniform distribution on $[-1,1]$, with Legendre polynomials as the orthonormal basis. We also consider the case of a two-dimensional random variable $\xi = (\xi^{(1)}, \xi^{(2)})$, where $\xi^{(1)}, \xi^{(2)} \stackrel{\mathrm{iid}}{\sim} \mathrm{Beta}(\beta+1,\alpha+1)$. The joint density is given by $\rho(\xi) = \rho(\xi^{(1)})\rho(\xi^{(2)})$, and the corresponding orthonormal basis is obtained by tensorizing one-dimensional orthonormal polynomials.

Throughout this section, we set the gravitational constant to $g=9.812$ in the first two examples and to $g=1$ in the remaining examples. Unless otherwise stated, we employ $K_{\rm PCE}=4$ terms in the polynomial chaos expansion. For discontinuous problems, we implement the slope limiting procedure and take $tol = 0.02$ as the default parameter in our troubled-cell indicator. For visualization, we plot the water surface $w \coloneqq h + B$ rather than the conservative variable $h$. We impose periodic boundary conditions in the accuracy test of Section 5.1, wall boundary conditions in the well-balanced tests of Section 5.2, and outflow boundary conditions using zero-order extrapolation in the rest of examples.

To compare energy conservation and dissipation, we measure the relative change in energy for periodic boundary conditions and the relative change in augmented energy for non-periodic boundary conditions, introduced in Section \ref{def_aug_energy}:
\begin{align}\label{eq:rel-energies}
  \text{relative change in (original) energy} &= \frac{E_{\Delta x}(t)- E_{\Delta x}(0)}{E_{\Delta x}(0)},& & \text{ for periodic boundary} \\ 
  \text{relative change in augmented energy} &= \frac{\widetilde{E}_{\Delta x}(t) - \widetilde{E}_{\Delta x}(0)}{\widetilde{E}_{\Delta x}(0)},& & \text{ for non-periodic boundary}
\end{align}
where $E_{\Delta x}(t) = \sum_{i}  E_{\Delta x,i}(t) =  \sum_{i} \int_{\mathcal{I}_i} \Big(\frac{1}{2}g \| \widehat{h}_{\Delta x}\|^2_{l^2} + g\widehat{h}_{\Delta x}^\top\widehat{B}_{\Delta x} + \frac{1}{2}\widehat{q}_{\Delta x}^\top\widehat{u}_{\Delta x} \Big) dx$. 

\subsection{Accuracy test}\label{eg_accuracytest}
We begin by examining the order of accuracy of the proposed DG-SG scheme for the stochastic shallow water system. This test is a stochastic modification of the test discussed in \cite{fu2022high}.

Consider the smooth problem with a stochastic initial water surface.
\begin{equation}\label{accuracytest_SG}
    B(x) = \sin^2(\pi x), \quad h(x,0,\xi) = 5+e^{\cos(2\pi x)} + 0.1\xi, \quad q(x,0) = \sin(\cos(2\pi x)),
\end{equation}
where $\xi$ follows the uniform distribution on $[-1,1]$. The computational domain is a periodic unit interval $[0,1]$, and the final time is $t=0.1$. 
 A reference solution is computed on a fine grid with 320 elements for each scheme. For the time evolution solver, we utilize the third-order Strong Stability-Preserving (SSP) Runge-Kutta (RK3) method \cite{gottlieb2001strong}. 

 We illustrate the order of accuracy of our schemes using the water height $h$ by computing the error between the reference solution and the numerical test solution with the $ L^2$ norm in physical space and the weighted $ L^2_{\rho}$ norm in stochastic space.
\begin{equation}\label{error_def}
\begin{split}
    \text{Error}(h_{\Delta x, s}) &= \| h_{\Delta x, s}(x,t,\xi)-h_{ref}(x,t,\xi)\|_{L^2(\Omega;L_{\rho}^2(\mathbb{R}^d))},\\
    \|h(x,t,\xi )\|_{L^2(\Omega;L_{\rho}^2(\mathbb{R}^d))} & \coloneqq \sqrt{\int_{\Omega}\|h(x,t,\xi)\|_{L_{\rho}^2}^2dx }
\end{split}
\end{equation}

We compute the convergence orders of the numerical solutions for different polynomial degrees, as summarized in Table \ref{table1}. In this smooth test case, no slope limiting is applied. The results demonstrate that the DG schemes achieve the expected theoretical orders of accuracy for all polynomial degrees. The relative energy evolution shown in Figure \ref{fig1} is also consistent with theoretical predictions. Despite the use of an energy-conservative (EC) numerical flux, the fully discrete DG scheme exhibits energy decay due to temporal discretization. As the time step decreases, the numerical solution approaches the energy-conservative limit.

\begin{table}[H]
\centering
\begin{tabular}{l|l|ll|ll}
\noalign{\smallskip}
\hline
\noalign{\smallskip}
& & \multicolumn{2}{c|}{$t=0.09$} & \multicolumn{2}{c}{$t=0.1$} \\
\hline
Scheme & Grid size & Error & Order & Error & Order \\
\hline
\multirow{4}{4em}{$K_{\rm DG}=1$}
& $20$  &2.1060e-02 &  &  2.7132e-02 &  \\
& $40$  & 5.2827e-03  & 1.9951 &  6.8268e-03 & 1.9907   \\
& $80$  & 1.0850e-03 & 2.2836 &   1.5494e-03  & 2.1395 \\
& $160$ & 1.9148e-04  & 2.5024&  2.6830e-04 & 2.5297 \\
\hline
\multirow{4}{4em}{$K_{\rm DG}=2$}
& $20$  & 2.3581e-03  &  &  3.8999e-03 &  \\
& $40$  & 3.0620e-04 & 2.9451 &  5.3899e-04 &  2.8551 \\
& $80$  & 2.9930e-05 &3.3548 &  5.6016e-05 & 3.2663 \\
& $160$ & 3.4251e-06 & 3.1274 &   5.6957e-06 & 3.2979\\
\hline
\multirow{4}{4em}{$K_{\rm DG}=3$}
& $20$  & 5.1125e-04 &  &  9.1144e-04 &  \\
& $40$  & 3.0818e-05 & 4.0522 &  8.0658e-05&    3.4983 \\
& $80$  &  1.7763e-06 & 4.1168 &  4.1137e-06& 4.2933 \\
& $160$ & 1.0971e-07 & 4.0171 &   2.4117e-07  & 4.0924 \\
\hline
\end{tabular}
\captionsetup{font={scriptsize},justification=raggedright}
\caption{Accuracy test for Section \ref{eg_accuracytest}. Orders of convergence at $t=0.09$ and $t=0.1$. The reference solution is computed with $320$ elements. $C_{\rm CFL}=0.1$. Error is computed using \eqref{error_def}.}
\label{table1}
\end{table}

\begin{figure}[htbp]
    \centering
    \begin{subfigure}[t]{0.5\textwidth}
        \centering
        \includegraphics[height=2.3in]{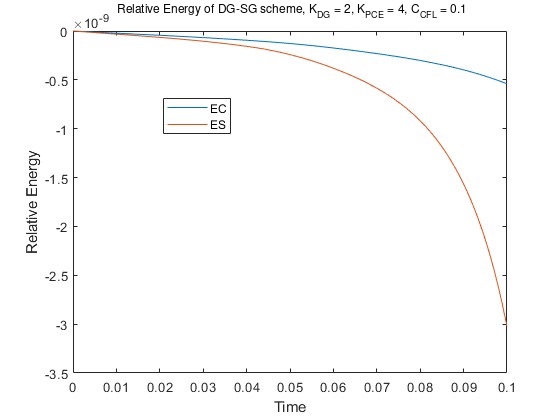}
        \captionsetup{font={scriptsize},justification=raggedright}
        \caption{EC compared to ES, $C_{\rm CFL}=0.1$}
    \end{subfigure}%
    ~ 
    \begin{subfigure}[t]{0.5\textwidth}
        \centering
        \includegraphics[height=2.3in]{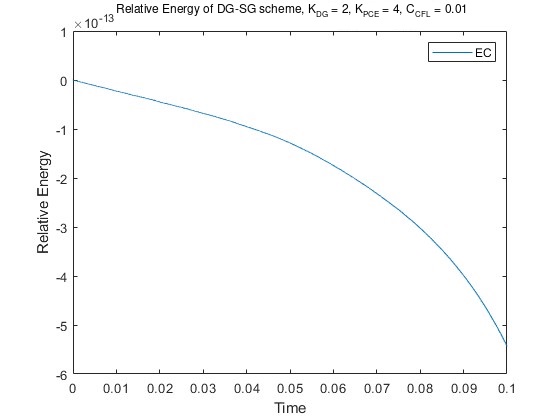}
        \captionsetup{font={scriptsize},justification=raggedright}
        \caption{EC, $C_{\rm CFL}=0.01$}
    \end{subfigure}
    \captionsetup{font={scriptsize},justification=raggedright}
    \caption{Relative energy evolution for the accuracy test in Section \ref{eg_accuracytest}. Left: comparison of entropy-conservative (EC) and entropy-stable (ES) fluxes with $C_{\rm CFL}=0.1$. Right: entropy-conservative flux with $C_{\rm CFL}=0.01$.}
    \label{fig1}
\end{figure}

\subsection{Well-balanced test}

 In this example, we test the well-balanced property of our proposed methods to ensure that the lake-at-rest steady state is exactly preserved. This is a stochastic modification of the well-balanced test in \cite{fu2022high}, with different choices of bottom topography. The stochastic smooth bottom is given by
 \begin{equation}\label{eg2_bottom1}
     B(x,\xi) = 5\exp \big( -0.4(x-5)^2\big) + 0.01\xi,
 \end{equation}
 and the stochastic discontinuous bottom is given by
 \begin{equation}\label{eg2_bottom2}
     B(x,\xi) = \left\{
\begin{aligned}
&4+0.01\xi,&  &4<x<8, \\
  &0,&  &\text{otherwise},
\end{aligned}
\right.
 \end{equation}
 and the steady state initial condition is given by
 \begin{align}
     w(x,0)& = 10,& q(x,0) &= 0.
 \end{align}

\begin{table}[htbp]
    \centering
    \begin{tabular}{llllll}
    \noalign{\smallskip}
\hline
\noalign{\smallskip}
\# of elements  &$L^2$ Error in $h_{\Delta x,s}$ & $L^2$ Error in $u_{\Delta x,s}$ & $L^2$ Error in $q_{\Delta x,s}$ \\
\noalign{\smallskip}

$100$  &  1.4666e-14 & 2.1661e-13    &  1.8548e-12  \\

$200 $  &2.2580e-14 &  4.4469e-13  &    3.8231e-12  \\
$400$  &   2.5704e-14  &  8.1617e-13  &      6.9626e-12 \\

\hline
\end{tabular}
\captionsetup{font={scriptsize},justification=raggedright}
  \caption{Well-balanced test with smooth stochastic bottom in \eqref{eg2_bottom1}. $T=0.5$. Third order DG with $K_{\rm DG}=2$ and $K_{\rm PCE} = 4$.}
  \vspace{-0.5cm}
\label{table2}
\end{table}

\begin{table}[htbp]
    \centering
    \begin{tabular}{llllll}
    \noalign{\smallskip}
\hline
\noalign{\smallskip}
\# of elements  &$L^2$ Error in $h_{\Delta x,s}$ & $L^2$ Error in $u_{\Delta x,s}$ & $L^2$ Error in $q_{\Delta x,s}$ \\
\noalign{\smallskip}

$100$  &  2.4975e-14  & 3.2160e-13    &  2.3598e-12  \\

$200 $  & 2.7255e-14 &  6.5619e-13  &    4.7504e-12\\
$400$  &    2.7366e-14  &  1.3076e-12  &      9.4330e-12 \\

\hline
\end{tabular}
\captionsetup{font={scriptsize},justification=raggedright}
  \caption{Well-balanced test with discontinuous stochastic bottom in \eqref{eg2_bottom2}. $T=0.5$. Third order DG with $K_{\rm DG}=2$ and $K_{\rm PCE} = 4$.}
\label{table3}
\end{table}

We solve the problem in the computational domain $[0,10]$ with wall boundary conditions \cite{fu2022high}, up to time $t=0.5$ on three uniform meshes with $100, 200$, and $400$ cells, and compute the $L^2(\Omega;L_{\rho}^2(\mathbb{R}^d))$ errors defined in \eqref{error_def} in Table \ref{table2} and \ref{table3}. The errors are measured against the initial steady state at the final time. All errors are at the level of round-off, confirming the well-balanced property of the proposed scheme.

\subsection{Another well-balanced test with near-dry region}\label{sec_eg5-3}

We consider another well-balanced test with a near-dry region. The test is modified from \cite{dai2021hyperbolicity}, with a stochastic water surface,
\begin{align}
    w(x,0,\xi) &= \left\{
\begin{aligned}
&1 + 0.001(\xi+1), & 0.1<x<0.2, \\
  &1, &\text{otherwise,}
\end{aligned}
\right.& \xi &\sim \mathcal{U}(-1,1),& u(x,0)&=0,
\end{align}
and a deterministic bottom topography
\begin{align}
B(x) &= \left\{
\begin{aligned}
&9.995(x - 0.3),  &0.3 \le x \le 0.4, \\
&0.9995 - 0.0025 \sin^2\!\big(25\pi (x - 0.4)\big), & 0.4 \le x \le 0.6, \\
&-9.995(x - 0.7),  &0.6 \le x \le 0.7, \\
&0, & \text{otherwise}.
\end{aligned}
\right.
\end{align}
This example is designed to assess the robustness of the proposed DG-SG scheme when the water height becomes small and the hyperbolicity-preserving criterion and velocity desingularization become important. The computational domain is $[-1,1]$, up to time $t=1$. A small uncertainty region was originally located at $x \in (0.1,0.2)$, where the water surface is slightly perturbed. The positivity-preserving criterion is activated due to the existence of a near-dry region, where the water height $h$ is relatively small. We compute the solution on a uniform mesh with mesh size $\Delta x = 1/100$, and plot the mean, standard deviation and quantile region, of water surface and discharge, respectively, in Figure \ref{fig2}. We also observe that the energy decays throughout the simulation.

\begin{figure}[htbp]
    \centering
    \includegraphics[width=1\linewidth]{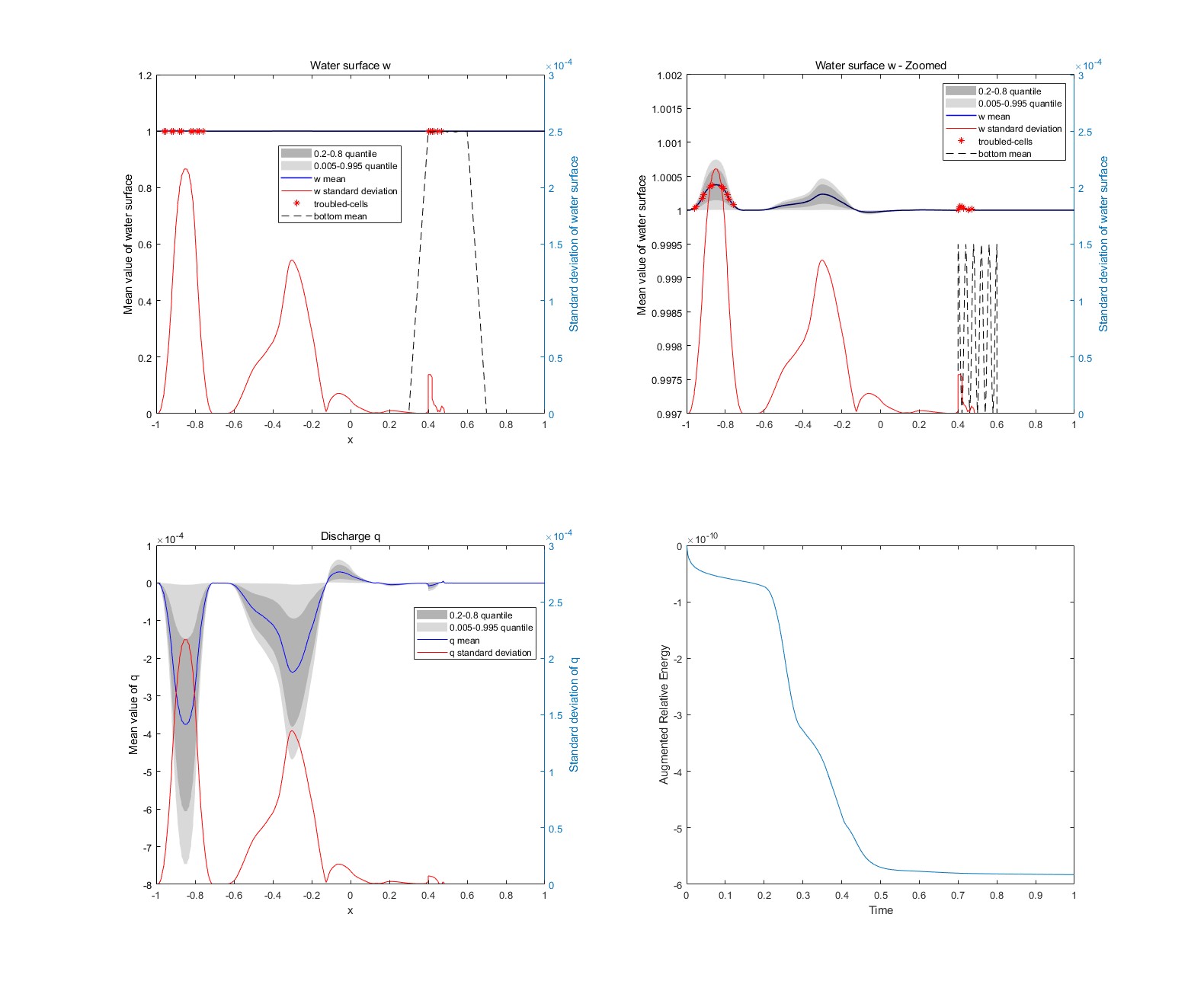}
    \captionsetup{font={scriptsize},justification=raggedright}
    \caption{Mean, standard deviation, quantile region, and augmented relative energy evolution for the near-dry perturbation test in Section \ref{sec_eg5-3}. $\Delta x = 1/100$. $T=1$. Third order DG with $K_{\rm DG}=2$ and $K_{\rm PCE} = 4$.}
    \label{fig2}
\end{figure}

The initial stochastic perturbation on the water surface is $10^{-3}$. Throughout the time evolution, the 
maximum water surface displacement from the steady state remains within its initial perturbation threshold, demonstrating the capability of our proposed scheme to accurately capture the lake-at-rest solution under small perturbations.

\subsection{Perturbation to lake at rest}\label{sec_eg5-4}
We consider an example proposed in \cite{dai2024energy,chertock2015well}, which is a perturbation to the lake-at-rest solution with a stochastic initial water surface
\begin{align}
    w(x,0,\xi) &= \left\{
\begin{aligned}
&1 + 0.001(\xi+1), & |x|\le 0.05, \\
  &1, &\text{otherwise,}
\end{aligned}
\right.& u(x,0)&=0,
\end{align}
and with the deterministic bottom topography
\begin{equation}\label{eq:lar-B1}
    B(x) = \left\{
\begin{aligned}
& 0.25(\cos(5\pi(x+0.35))+1), & &-0.55<x<-0.15, \\
  & 0.125(\cos(10\pi(x-0.35))+1), & &0.25<x<0.45,\\
  & 0, & &\text{otherwise.}
\end{aligned}
\right.
\end{equation}

The computational domain is $[-1,1]$, and $\xi \sim \mathcal{U}(-1,1)$. In this example, we choose a relatively large tolerance $\mathrm{tol}=0.05$ in the troubled-cell indicator to reduce the number of detected troubled cells and thereby improve resolution. We compute the DG solution up to time $t=0.8$ on meshes with $N_x=200$ and $400$ cells. A third-order DG method (polynomial degree $K_{\rm DG}=2$) is used for the spatial discretization, together with $K_{\rm PCE}=4$ terms in the PCE for the stochastic Galerkin approximation.

As shown in Figure \ref{fig3}, the proposed DG-SG scheme accurately captures both left- and right-going waves of small amplitude, including the propagation of uncertainty. This behavior is consistent with the well-balanced structure of the DG-SG scheme. As expected, the finer mesh yields improved solution resolution. Moreover, the DG results provide higher resolution than the finite volume (FV) results reported in \cite{dai2024energy} on comparable meshes. From the
energy plot in Figure \ref{fig4}, the energy decays as the theoretical result. In addition, the solution with the finer mesh demonstrates less numerical dissipation.

\begin{figure}[htbp]
    \centering
    \includegraphics[width=1\linewidth]{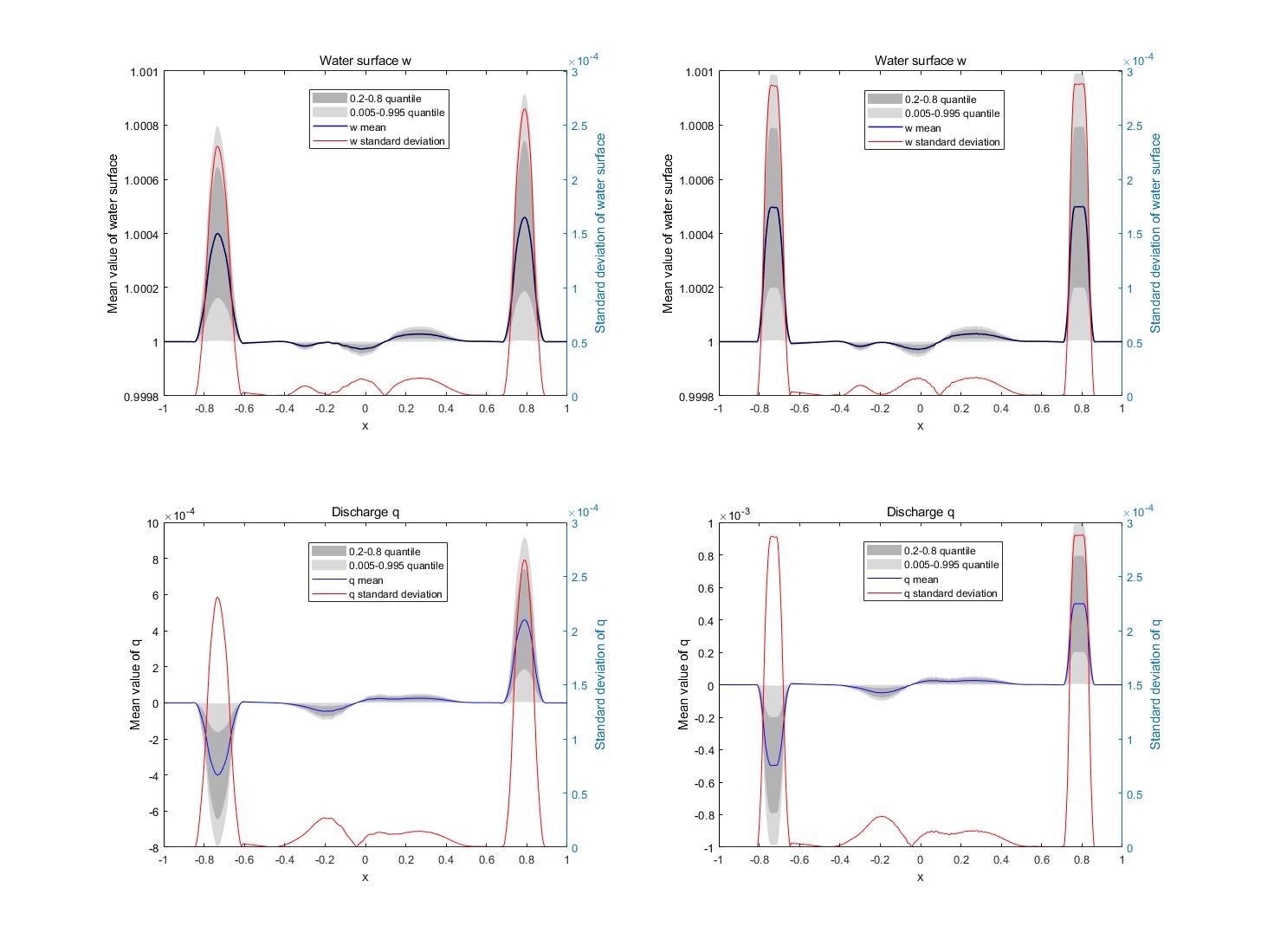}
    \captionsetup{font={scriptsize},justification=raggedright}
    \caption{Mean, standard deviation, quantile region of the water surface and discharge for the perturbation-to-lake-at-rest problem in Section \ref{sec_eg5-4} at $T=0.8$. Left: $N_x=200$. Right: $N_x=400$. Third-order DG with $K_{\rm DG}=2$ and $K_{\rm PCE}=4$.}
    \label{fig3}
\end{figure}

\begin{figure}[htbp]
    \centering
    \includegraphics[width=1\linewidth]{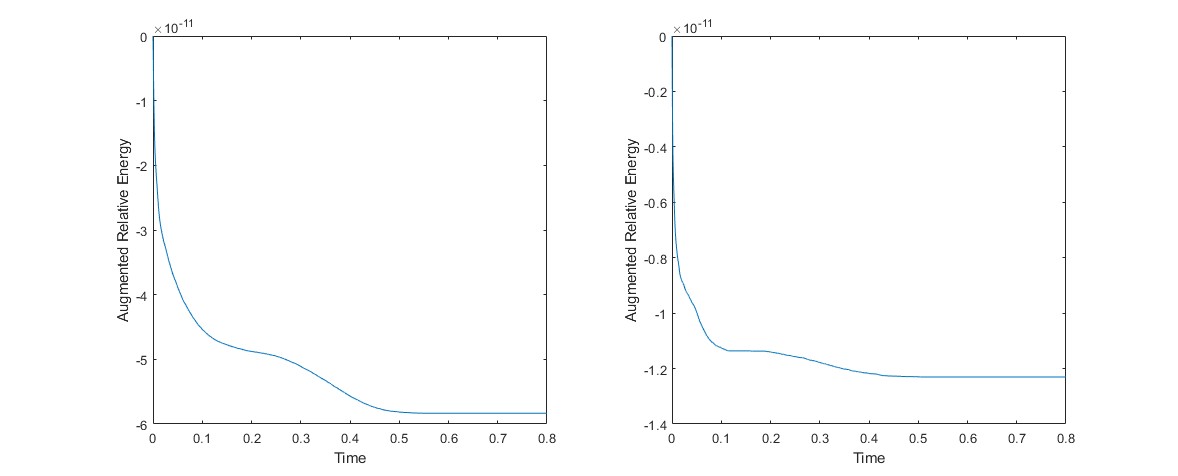}
    \captionsetup{font={scriptsize},justification=raggedright}
   \caption{Augmented relative energy evolution for the perturbation-to-lake-at-rest problem in Section \ref{sec_eg5-4}. Left: $N_x=200$. Right: $N_x=400$. Third-order DG with $K_{\rm DG}=2$ and $K_{\rm PCE}=4$.}
    \label{fig4}
\end{figure}

In addition, we illustrate the accuracy of the DG-SG schemes on this problem with a discontinuous initial water surface in Table \ref{table4} and Table \ref{table5} at different evolution time. From the tables, we observe that our proposed DG-SG scheme performs well on problems with nonsmooth solutions. Note that the order of convergence is limited by the solution regularity and hence cannot achieve the optimal convergence rate, as expected. In this discontinuous problem, we also introduce the $L^1(\Omega;L_{\rho}^2(\mathbb{R}^d))$ error used in \cite{dai2024energy} as \eqref{error_def_l1}. The results are reported in Table \ref{table4} and Table \ref{table5}, which are consistent with the FVM result in \cite{dai2024energy}. 
\begin{equation}\label{error_def_l1}
\begin{split}
    \text{Error}(h_{\Delta x, s}) &= \| h_{\Delta x, s}(x,t,\xi)-h_{ref}(x,t,\xi)\|_{L^1(\Omega;L_{\rho}^2(\mathbb{R}^d))},\\
    \|h(x,t,\xi )\|_{L^1(\Omega;L_{\rho}^2(\mathbb{R}^d))} & \coloneqq \int_{\Omega}\|h(x,t,\xi)\|_{L_{\rho}^2}dx
\end{split}
\end{equation}

\begin{table}[htbp]
    \centering
    \begin{tabular}{lllllll}
    \noalign{\smallskip}
\hline
\noalign{\smallskip}
Scheme & Grid size  & Error $L^1 L^2_{\rho}$ & Order & Error $L^2 L^2_{\rho}$ & Order \\

\hline
\multirow{4}{5em}{$K_{\rm DG}=2$\\ $K_{\rm PCE}$=2}& $50$
 &   4.5994e-04 &  &  6.2557e-04  &\\

&$100$  &  1.5884e-04 & 1.5339  & 2.0524e-04&   1.6079\\

&$200 $  & 4.7583e-05 & 1.7390  & 7.5775e-05 & 1.4375\\
&$400$  & 1.3179e-05   &   1.8523 & 3.0206e-05   & 1.3269\\
\hline
\end{tabular}
\captionsetup{font={scriptsize},justification=raggedright}
  \caption{Accuracy test for the perturbation-to-lake-at-rest problem in Section \ref{sec_eg5-4}. Order of convergence for $K_{\rm DG}=2$, $K_{\rm PCE}=2$, $T=0.8$. Reference solution computed with $800$ elements. $C_{\rm CFL} = 0.1$.}
\label{table4}
\end{table}

\begin{table}[htbp]
    \centering
    \begin{tabular}{lllllll}
    \noalign{\smallskip}
\hline
\noalign{\smallskip}
Scheme & Grid size  & Error $L^1 L^2_{\rho}$ & Order & Error $L^2 L^2_{\rho}$ & Order \\

\hline
\multirow{4}{5em}{$K_{\rm DG}=2$\\ $K_{\rm PCE}$=2}& $50$
 &   4.6058e-04 &  &  6.2550e-04  &\\

&$100$  &  1.6004e-04 & 1.5250  & 2.0582e-04&   1.6037\\

&$200 $  & 4.7728e-05 & 1.7455  & 7.5782e-05 & 1.4414\\
&$400$  & 1.3215e-05   &   1.8526 & 3.0205e-05   & 1.3270\\
\hline
\end{tabular}
\captionsetup{font={scriptsize},justification=raggedright}
  \caption{Accuracy test for the perturbation-to-lake-at-rest problem in Section \ref{sec_eg5-4}. Order of convergence for $K_{\rm DG}=2$, $K_{\rm PCE}=2$, $T=0.85$. Reference solution computed with $800$ elements. $C_{\rm CFL} = 0.1$.}
\label{table5}
\end{table}

\subsection{Perturbation to lake at rest with a two-dimensional random variable}\label{sec_eg5-5}
As a final test, we consider a two-dimensional stochastic space variant of the example used for the perturbation to lake at rest in Section \ref{sec_eg5-4}. We consider the shallow water system with a stochastic water surface
\begin{equation}
w(x,0,\xi) =
\begin{cases}
1 + 0.001(\xi_1 + 1), & |x| \le 0.05, \\
1, & \text{otherwise},
\end{cases}
\qquad
q(x,0) = 0,
\end{equation}
and with a stochastic bottom topography
\begin{equation}
B(x,\xi) =
\begin{cases}
0.25\bigl(\cos(5\pi(x + 0.35)) + 1\bigr) + 0.12 e^{\xi_2}, 
& -0.55 < x < -0.15, \\
0.125\bigl(\cos(10\pi(x - 0.35)) + 1\bigr) + 0.1(1 + \xi_1), 
& 0.25 < x < 0.45, \\
0, & \text{otherwise}.
\end{cases}
\end{equation}
where $\xi_1$ and $ \xi_2$ are independent, beta-distributed variables each with parameters $(\alpha,\beta)=(1,3)$. We use $K_1=K_2=4$ PCE modes for the random variables $\xi_1$ and $\xi_2$, resulting in a tensor-product polynomial space of dimension $K_{\rm PCE}=K_1K_2=16$. We compute the DG solution up to time $t=0.8$ on meshes with $N_x=200$ and $400$ cells, using a third-order DG method (polynomial degree $K_{\rm DG}=2$) for spatial discretization.
\begin{figure}[htbp]
    \centering
    \includegraphics[width=1\linewidth]{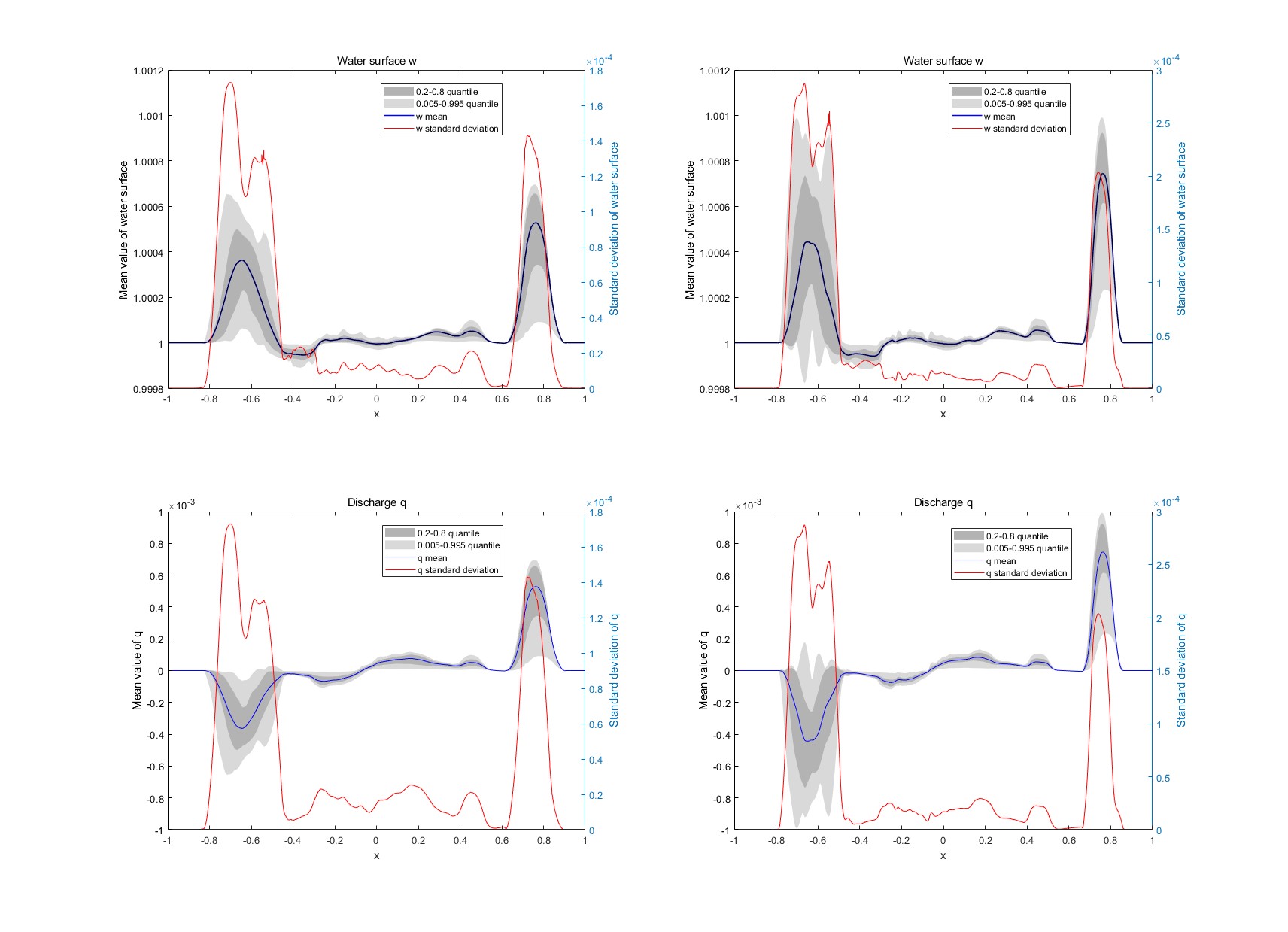}
    \captionsetup{font={scriptsize},justification=raggedright}
    \caption{Mean, standard deviation, quantile region of the water surface and discharge plot of Section \ref{sec_eg5-5}. Left: solution with $N_x=200$ elements. Right: solution with $N_x=400$ elements. Third order DG with $K_{\rm DG}=2$ and $K_{\rm PCE} = K_1K_2 = 16$.}
    \label{fig5}
\end{figure}

\begin{figure}[htbp]
    \centering
    \includegraphics[width=1\linewidth]{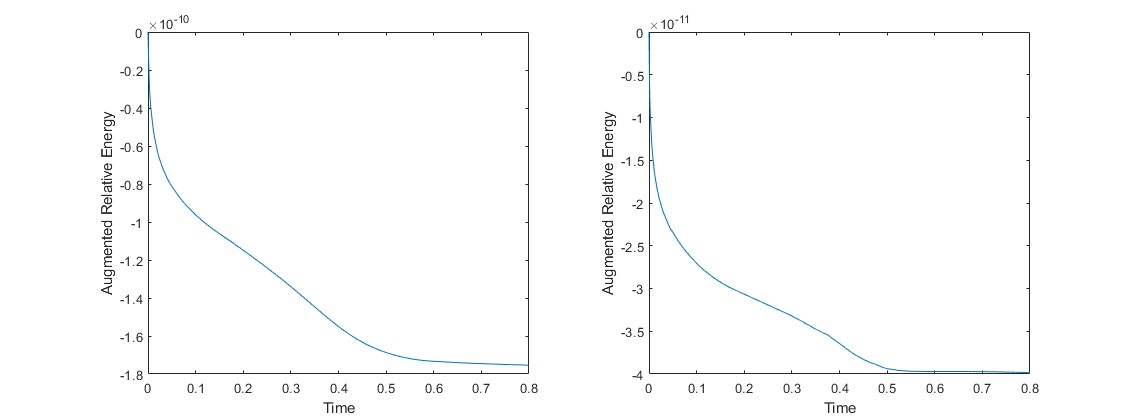}
    \captionsetup{font={scriptsize},justification=raggedright}
    \caption{Augmented relative energy evolution for the two-dimensional stochastic-input test in Section \ref{sec_eg5-5}. Left: $N_x=200$. Right: $N_x=400$. Third-order DG with $K_{\rm DG}=2$ and $K_{\rm PCE}=K_1K_2=16$.}
    \label{fig6}
\end{figure}

From the solution plot in Figure \ref{fig5}, the DG solutions exhibit more complex wave structures than those in Section \ref{sec_eg5-4}, due to the more intricate uncertainties embedded in the bottom topography. These results demonstrate that the proposed DG-SG scheme remains effective for multi-dimensional stochastic inputs and can resolve the interaction between uncertainty in the initial water surface and uncertainty in the bottom topography. In addition, the energy plots in Figure \ref{fig6} are consistent with those in Section \ref{sec_eg5-4}, supporting the entropy stability of the method in the multivariate stochastic setting.

\section{Conclusion}\label{Sec6}
In this work, we developed a semi-discrete discontinuous Galerkin–stochastic Galerkin (DG-SG) scheme for the hyperbolicity-preserving skew-symmetric SG SWE system. Using the skew-symmetric structure, we constructed well-balanced, energy conservative and energy stable numerical fluxes for the proposed numerical schemes, ensuring the discrete entropy admissibility criteria for selecting the desired physical solution among non-unique weak solutions.  We presented several numerical experiments demonstrating the efficiency and robustness of our schemes. Currently, our schemes are constructed on one-physical dimension; one future direction is to extend the present framework to two-physical dimension with unstructured meshes. Another possibility for future research is to extend the developed framework to models associated with dry/wet interfaces.

\section*{Acknowledgement}
The work of Yekaterina Epshteyn, Akil Narayan and Yinqian Yu was partially supported by NSF DMS-2207207. AN was also partially supported
by NSF DMS-1848508.

\clearpage
\footnotesize{\bibliographystyle{siam}} 
\bibliography{ref} 

\end{document}